\renewcommand\footnotemark{}
\begin{document}

\title{Integrality of the higher Rademacher symbols}

\author{
  Cormac ~O'Sullivan\footnote{{\it Date:} July 15, 2023.
\newline \indent \ \ \
  {\it 2020 Mathematics Subject Classification:} 11F20, 11F67, 11B68, 11R11
\newline \indent \ \ \
{\em Key words and phrases.} Rademacher symbols, integrality, Bernoulli numbers, zeta function values.
  \newline \indent \ \ \
Support for this project was provided by a PSC-CUNY Award, jointly funded by The Professional Staff Congress and The City
\newline \indent \ \ \
University of New York.}
  }

\date{}

\maketitle

%\maketitle

%modular symbol
\def\s#1#2{\langle \,#1 , #2 \,\rangle}

%domains
%\def\H{{\mathbf{H}}}
\def\F{{\frak F}}
\def\C{{\mathbb C}}
\def\R{{\mathbb R}}
\def\Z{{\mathbb Z}}
\def\Q{{\mathbb Q}}
\def\N{{\mathbb N}}
%symbols
\def\G{{\Gamma}}
\def\GH{{\G \backslash \H}}
\def\g{{\gamma}}
\def\L{{\Lambda}}
\def\ee{{\varepsilon}}
\def\K{{\mathcal K}}
\def\Re{\mathrm{Re}}
\def\Im{\mathrm{Im}}
\def\PSL{\mathrm{PSL}}
\def\SL{\mathrm{SL}}
\def\Vol{\operatorname{Vol}}
\def\lqs{\leqslant}
\def\gqs{\geqslant}
\def\sgn{\operatorname{sgn}}
\def\res{\operatornamewithlimits{Res}}
\def\li{\operatorname{Li_2}}
\def\lip{\operatorname{Li}'_2}
\def\pl{\operatorname{Li}}

\def\ei{\mathrm{Ei}}

\def\clp{\operatorname{Cl}'_2}
\def\clpp{\operatorname{Cl}''_2}
\def\farey{\mathscr F}

\def\dm{{\mathcal A}}
\def\ov{{\overline{p}}}
\def\ja{{K}}

\def\nb{{\mathcal B}}
\def\cc{{\mathcal C}}
\def\nd{{\mathcal D}}

\def\u{{\text{\rm u}}}
\def\v{{\text{\rm v}}}
\def\ib{{\text{\,\rm i}}}
\def\jb{{\text{\,\rm j}}}
\def\qq{{f}}%{{\mathfrak F}}

\newcommand{\stira}[2]{{\genfrac{[}{]}{0pt}{}{#1}{#2}}}
\newcommand{\stirb}[2]{{\genfrac{\{}{\}}{0pt}{}{#1}{#2}}}
\newcommand{\eu}[2]{{\left\langle\!\! \genfrac{\langle}{\rangle}{0pt}{}{#1}{#2}\!\!\right\rangle}}
\newcommand{\eud}[2]{{\big\langle\! \genfrac{\langle}{\rangle}{0pt}{}{#1}{#2}\!\big\rangle}}
\newcommand{\norm}[1]{\left\lVert #1 \right\rVert}
\newcommand{\dx}[1]{\overset{*}{#1}}

\newcommand{\e}{\eqref}
\newcommand{\bo}[1]{O\left( #1 \right)}
\newcommand{\ol}[1]{\,\overline{\!{#1}}} % overline short italic

%theorems

\newtheorem{theorem}{Theorem}[section]
\newtheorem{lemma}[theorem]{Lemma}
\newtheorem{prop}[theorem]{Proposition}
\newtheorem{conj}[theorem]{Conjecture}
\newtheorem{cor}[theorem]{Corollary}
\newtheorem{assume}[theorem]{Assumptions}
\newtheorem{adef}[theorem]{Definition}

\numberwithin{figure}{section}
\numberwithin{table}{section}

%\newcounter{coundef}
%\newtheorem{adef}[coundef]{Definition}

\newcounter{counrem}
\newtheorem{remark}[counrem]{Remark}

\renewcommand{\labelenumi}{(\roman{enumi})}
\newcommand{\spr}[2]{\sideset{}{_{#2}^{-1}}{\textstyle \prod}({#1})}
\newcommand{\spn}[2]{\sideset{}{_{#2}}{\textstyle \prod}({#1})}

\numberwithin{equation}{section}

% this fixes spacing around \left, \right
\let\originalleft\left
\let\originalright\right
\renewcommand{\left}{\mathopen{}\mathclose\bgroup\originalleft}
\renewcommand{\right}{\aftergroup\egroup\originalright}

\bibliographystyle{alpha}

\begin{abstract}
Rademacher symbols may be defined in terms of Dedekind sums, and give the value at zero of the zeta function associated to a narrow ideal class of a real quadratic field. Duke extended these symbols to give the zeta function values at all negative integers. Here we prove Duke's  conjecture that these higher Rademacher symbols are integer valued, making the above zeta value denominators as simple as the corresponding Riemann zeta value denominators. The proof uses detailed properties of Bernoulli numbers, including a generalization of the Kummer congruences.
\end{abstract}

\section{Introduction}

For $\G= \SL(2,\Z)$, the Rademacher symbol $\Psi:\G \to \Z$ is not quite a homomorphism, but  possesses many important properties that link it to modular forms, continued fractions, knots and much more, as discussed in the introduction to the compelling paper \cite{duke}. The value at $s=0$ of the zeta function associated to a narrow ideal class of a real quadratic field may also be expressed in terms of $\Psi$. Duke formulated the higher Rademacher symbols $\Psi_n$ in \cite{duke} so that they provide the corresponding values at $s=1-n$ for $n=2, 3, 4, \dots$. This incorporates the work of Siegel in \cite{Si68}.

After setting up some notation, we give the definition of $\Psi_n$ next in terms of  general Dedekind sums. Let $T=\left(\begin{smallmatrix}1&1\\0&1\end{smallmatrix}\right)$, $V=\left(\begin{smallmatrix}1&0\\1&1\end{smallmatrix}\right)$ and for $A=\left(\begin{smallmatrix}a&b\\c&d\end{smallmatrix}\right)$ in  $\G$, set
\begin{equation*}
  \u(A):= \gcd(b,c,d-a)\gqs 1, \qquad \v(A):= \sgn(a+d)\frac{c}{\u(A)}.
\end{equation*}
The Bernoulli numbers $B_n$ play an important role from the start. Define $\ib_n$, $\jb_n$ by means of
\begin{equation} \label{ij}
   \frac{\ib_n}{\jb_n}=-\frac{B_{2n}}{2n}= \zeta(1-2n) \qquad \qquad (n \in \Z_{\gqs 1}),
\end{equation}
with $\ib_n/\jb_n$ in lowest terms and $\jb_n>0$.
For $m\gqs 0$,  the periodized Bernoulli polynomials $\ol{B}_m(x)$ are  $B_m(x-\lfloor x\rfloor)$, with the stipulation that $\ol{B}_1(x)=0$ for $x\in \Z$.
The general Dedekind sum, \cite{Ca54,LJC}, is
\begin{equation}\label{ded}
  S_{r,s}(a,c):=\sum_{h \bmod{|c|}} \ol{B}_r\left( \frac{ah}c\right) \ol{B}_s\left( \frac{h}c\right) \qquad \qquad (r,s \in \Z_{\gqs 0}, \ c \in \Z_{\neq 0}).
\end{equation}
Finally, for $r,s \in \Z_{\gqs 0}$, $r+s\gqs 2$, define the following hypergeometric polynomial  of degree at most $r+s-1$,
\begin{equation}\label{frs}
  F_{r,s}(z)  :=\frac{(r+s-2)!}{r! s!} {}_2 F_1\left( 1-r,1-s;\frac {3 -r-s}2;\frac{z+2}4 \right).
\end{equation}

\begin{adef} \cite[Sect. 2]{duke} {\rm
Let $n$ be a positive integer. For $A=\left(\begin{smallmatrix}a&b\\c&d\end{smallmatrix}\right)$ in $\G$ with $c \neq 0$,
\begin{equation} \label{sg}
  \Psi_n(A):= -\sgn(c)  \cdot  \v(A)^{n-1}  \cdot \jb_n \sum_{r+s=2n} F_{r,s}(a+d) \cdot S_{r,s}(a,c).
\end{equation}
If $n=1$ then $3\sgn(c(a+d))$ should be subtracted from the right of \e{sg}. When $c=0$ and hence $A=\pm T^k$, set $\Psi_n(\pm I):=0$ and $\Psi_n(\pm T^k):=\Psi_n(\pm V^{-k})$.} %(-1)^n \Psi_n(V^k)$.}
\end{adef}

Then $\Psi_n(A)$ may be computed for any $A$ as a finite sum of rational numbers. Here, $\Psi_1$ equals the original Rademacher symbol $\Psi$. Since $\Psi_n(-A)=\Psi_n(A)$, these  symbols are well-defined on the modular group $\G/\{\pm I\}$. The fundamental properties of $\Psi_n$ are given after \cite[Eq. (2.4)]{duke}. These include that $\Psi_n(A)$ does not change under conjugation, $A \mapsto BAB^{-1}$, and for $n\gqs 2$,  $\Psi_n$ sends elliptic elements to $0$ and parabolic elements to integer multiples of $\ib_n$.

Associated to each narrow ideal class $\mathcal A$ there is a hyperbolic $A \in \G$.  Duke gives an elegant direct proof in \cite[Thm. 4]{duke} of the connection
\begin{equation} \label{zet}
  \zeta(1-n, \mathcal A) = \frac{\Psi_n(A)}{\jb_n},
\end{equation}
between values of the corresponding zeta function and the higher Rademacher symbols. Equivalent identities were proved by Hecke for $n=1$ and by Siegel and Shintani in general.
Comparing the form of \e{zet} with \e{ij} and the Riemann zeta function value $\zeta(1-2n)$, it is natural to ask if $\Psi_n(A)$ is an integer. In 1976, Zagier gave a related formula for $\zeta(1-n, \mathcal A)$, but was unable to find a good estimate for the denominator. Zagier's formula is rederived in Corollary \ref{coo}. % in Section \ref{mor}.

\begin{conj} \cite[Sect. 2]{duke} \label{con}
For every $n\gqs 2$, the higher Rademacher symbol satisfies $\Psi_n: \G \to \Z$.
\end{conj}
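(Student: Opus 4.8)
The plan is to prove that $v_p(\Psi_n(A)) \gqs 0$ for every prime $p$, where $v_p$ denotes the $p$-adic valuation. Since $\Psi_n$ is invariant under conjugation and, by the properties recalled just after \e{sg}, already vanishes on elliptic elements and takes integer multiples of $\ib_n$ on parabolic ones, the essential case is a hyperbolic $A$ with lower-left entry $c \neq 0$. In \e{sg} the prefactor $-\sgn(c)\,\v(A)^{n-1}$ is an integer, because $\u(A) \mid c$ forces $\v(A) = \sgn(a+d)\,c/\u(A) \in \Z$, and $\jb_n$ is a positive integer. Hence it suffices to bound the denominator of the finite rational number
\[
  \Sigma(A) := \sum_{r+s=2n} F_{r,s}(a+d)\, S_{r,s}(a,c),
\]
and to show that it divides $\v(A)^{n-1}\,\jb_n$.

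I would isolate the two independent sources of this denominator. Expanding each periodized Bernoulli polynomial through $B_m(x) = \sum_j \binom{m}{j} B_j x^{m-j}$ exhibits the powers of $c$ appearing in $S_{r,s}(a,c)$, while the multiplication relation $\sum_{h \bmod c} B_m(x + h/c) = c^{1-m} B_m(cx)$ controls how large those powers remain after the sum over $h$. The hypergeometric weights $F_{r,s}(a+d)$, whose denominators are supported on primes at most $2n$, are arranged so that in the combination $\Sigma(A)$ these $c$-powers collapse to at most $c^{n-1}$ relative to $\u(A)$ --- exactly what the factor $\v(A)^{n-1}$ absorbs. The remaining, genuinely arithmetic, denominators come from the Bernoulli numbers $B_r, B_s$ with $r+s=2n$; by von Staudt--Clausen a prime $p$ divides such a denominator only when $(p-1) \mid r$ or $(p-1) \mid s$, and one matches this against $v_p(\jb_n)$, which by \e{ij} and von Staudt--Clausen equals $v_p(2n)+1$ precisely when $(p-1) \mid 2n$.

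The crux, and the step I expect to be the main obstacle, is a prime $p$ with $(p-1) \mid 2n$. There $B_{2n}$ is itself $p$-singular, so $\jb_n$ supplies only a single extra factor of $p$ beyond $v_p(2n)$, whereas $\Sigma(A)$ a priori involves every $B_r$ with $(p-1) \mid r$ and $r < 2n$, each contributing its own factor of $p$ in the denominator; and when $p \mid c$ the Bernoulli and the $c$-power denominators become coupled. The total $p$-adic order therefore cannot be estimated term by term, and one needs congruences linking the different $S_{r,s}(a,c)$ as $(r,s)$ runs over $r+s=2n$. This is precisely where a generalization of the Kummer congruences is required: the aim is to show that, modulo the relevant power of $p$, the $p$-integral structure of $S_{r,s}(a,c)$ depends on $r$ and $s$ only through their residues modulo $p-1$, so that after multiplication by $\jb_n$ the hypergeometric sum $\Sigma(A)$ collapses to a $p$-integer. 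Assembling these local bounds over all $p$, together with the already-integral elliptic and parabolic values, then yields $\Psi_n(A) \in \Z$.
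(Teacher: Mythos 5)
Your proposal is a plan, not a proof: the two steps it rests on are precisely the ones left unestablished, and neither is routine. First, the claim that the hypergeometric weights $F_{r,s}(a+d)$ make the $c$-power denominators of $\sum_{r+s=2n} F_{r,s}(a+d)\, S_{r,s}(a,c)$ collapse to at most $(c/\u(A))^{n-1}$ is exactly the hard content of Duke's cohomological formula \e{dkm}, which rewrites $\Psi_n(A)$ for hyperbolic $A$ as $\ib_n \sum_j \left( (\tilde{c}_j)^{n-1} - (-\tilde{b}_j)^{n-1}\right) + \sum_j c_j^{n-1} G_n(\omega_j,\omega_j')$; you would need to reprove something of this strength, and the multiplication formula you quote applies to a single Bernoulli polynomial, not to the product $\ol{B}_r(ah/c)\,\ol{B}_s(h/c)$ occurring in \e{ded}, so it does not by itself control the sum over $h$. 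Second, and more seriously, the ``Kummer congruences for $S_{r,s}(a,c)$ as $(r,s)$ varies modulo $p-1$'' that your crux step calls for are not a known result you can invoke, and in the coupled case $p \mid c$ that you yourself flag, nothing of this shape is available. You have correctly located where the difficulty sits (the primes with $p-1 \mid 2n$, where Kummer-type congruences must enter --- this matches the paper's abstract), but locating the obstacle is not the same as removing it.

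The paper's route is designed to avoid exactly this. It never performs $p$-adic analysis on the Dedekind sums at all: formula \e{dkm}, quoted from Duke and proved there by a two-variable Eichler--Shimura argument, separates the dependence on $A$ --- which lands in the manifestly integral quantities $\tilde{b}_j, \tilde{c}_j$ and in $c_j^{n-1}G_n(\omega_j,\omega_j')$ --- from a single polynomial $G_n$ whose coefficients involve only Bernoulli numbers. Proposition \ref{ik} shows by elementary symmetric-function identities that $G_n \in \Z[x_1,x_2]$ forces $c_j^{n-1}G_n(\omega_j,\omega_j') \in \Z$, and Proposition \ref{all} converts the condition $G_n \in \Z[x_1,x_2]$ into the explicit Bernoulli statement \e{nec}, which no longer mentions $A$, $c$, or Dedekind sums. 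Only then do the Kummer--Cohen congruences (Theorem \ref{kummer}), Clarke's theorem (Theorem \ref{clar}) and the binomial valuation bounds (Lemma \ref{bino}) enter, in Theorem \ref{main}, where the prime-by-prime analysis you envisage is carried out on pure Bernoulli-number expressions rather than on $S_{r,s}(a,c)$. If you want to salvage your approach, the realistic repair is to import \e{dkm} as a black box; after that reduction your $p$-adic program essentially becomes the paper's Sections 3--5.
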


Our main result is that Duke's above conjecture is true; this is Corollary \ref{cos}. We prove a slightly stronger reformulation of Conjecture \ref{con}, given in \cite{duke} in terms of a certain polynomial $G_n(x)$ with coefficients involving Bernoulli numbers. This polynomial is described in the next section, where we use  reciprocity relations  to show that it is a period polynomial.  To prove that $G_n(x)$ has integer coefficients requires  further Bernoulli number properties, described in Section \ref{divb}, and including Cohen's extension of the classical Kummer congruences. As reviewed in Section \ref{ddu}, $G_n(x)$ having integer coefficients implies Conjecture \ref{con}.

For Rademacher symbols described in the wider context of general Fuchsian groups, see the overview in \cite{Bu}. The ideas in this paper should be applicable there too.

\vskip 3mm
{\bf Acknowledgements.} Many thanks to Karen Taylor for helpful conversations about this work and  Siegel's paper \cite{Si68}.

\section{Preliminaries}
\subsection{The spaces $\mathcal{P}_n$ and $\mathcal{W}_n$}
%Studying Duke \cite{duke} and Siegel \cite{Si68}.
A positive integer $n$ is fixed in the background. Write $x=(x_1,x_2)$ and define
\begin{equation} \label{qrxd}
  Q_r(x):= \left[ z^r\right] \left( (z-x_1)(z-x_2)\right)^{n-1},
\end{equation}
where $[z^r]$ extracts the coefficient of $z^r$.
Then $Q_r(x)$ is a symmetric polynomial in $x_1$ and $x_2$ of homogeneous degree $r^*:=2n-2-r$ for $0\lqs r\lqs 2n-2$. Explicitly,
\begin{equation} \label{qrx}
  Q_r(x) = (-1)^r (x_1 x_2)^{n-1-r} \sum_{u+v=r} \binom{n-1}{u} \binom{n-1}{v} x_1^u x_2^v.
\end{equation}
Let $\mathcal{P}_n$ be the $\C$ vector space with basis $Q_r(x)$  for $0\lqs r\lqs 2n-2$.

The group $\G$ is generated by $ T=\left(\begin{smallmatrix}1&1\\0&1\end{smallmatrix}\right)$ and $S=\left(\begin{smallmatrix}0&-1\\1&0\end{smallmatrix}\right)$ with $S^2=-I$.
Also write
\begin{equation*}
  U=TS=\left(\begin{smallmatrix}1&-1\\1&0\end{smallmatrix}\right), \qquad V= TST = -ST^{-1}S = \left(\begin{smallmatrix}1&0\\1&1\end{smallmatrix}\right),
\end{equation*}
with $U^3=-I$. For $A=\left(\begin{smallmatrix}a&b\\c&d\end{smallmatrix}\right)$, its entries may be specified with $a=a_A$, $b=b_A$ and so on.
As described in \cite[Sect. 5]{duke}, there is a natural action of $\G$ on $f(x) \in \mathcal{P}_n$ with, for example,
\begin{equation} \label{rec}
  f|T(x)=f(x_1+1,x_2+1), \qquad f|S(x)=(x_1 x_2)^{n-1} f(-1/x_1,-1/x_2),
\end{equation}
for the generators. We have
\begin{equation} \label{rec2}
  Q_r|T(x)= \sum_{\ell=0}^{2n-2} (-1)^{\ell+r}\binom{\ell}{r} Q_\ell(x), \qquad Q_r|S(x)=(-1)^r Q_{r^*}(x).
\end{equation}
With $U=TS$, it follows that
\begin{equation} \label{qqq}
  Q_r|U(x)= (-1)^{r}\sum_{\ell=0}^{2n-2} \binom{\ell^*}{r} Q_\ell(x), \qquad Q_r|U^2(x)=(-1)^{r}\sum_{\ell=0}^{2n-2} \binom{\ell}{r^*} Q_\ell(x).
\end{equation}

Any $f$ in $\mathcal{P}_n$ may be written as
\begin{equation} \label{any}
  f(x)=  \sum_{r=0}^{2n-2} c_r Q_r(x).
\end{equation}
The next result follows directly from \e{rec2} and \e{qqq}. See \cite[p. 199]{KZ} for similar relations.

\begin{prop} \label{uu}
We have that $f$ in \e{any} satisfies $f|(1+S)=0$ if and only if $c_{r^*}+(-1)^r c_r=0$ for $0\lqs r\lqs 2n-2$.
We have $f|(1+U+U^2)=0$ if and only if
\begin{equation*}
  \sum_{r=0}^{2n-2} \left[ \delta_{r,\ell}+(-1)^r \binom{\ell^*}{r}+(-1)^r \binom{\ell}{r^*}\right]c_r = 0
\end{equation*}
for all $\ell$ with $0\lqs \ell \lqs 2n-2$. Here, $\delta_{r,\ell}$ is the Kronecker delta.
\end{prop}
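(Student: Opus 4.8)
The plan is to expand $f$ in the basis $\{Q_r\}$, apply the explicit action formulas \e{rec2} and \e{qqq} term by term, regroup each result as a linear combination of the $Q_\ell$, and then invoke the linear independence of the $Q_\ell$ (they are a basis of $\mathcal{P}_n$) to read off the stated scalar conditions. Both equivalences have exactly this shape, so the only real content is careful bookkeeping with signs and re-indexing.

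For the $S$-statement I would start from $f=\sum_r c_r Q_r$ and use $Q_r|S=(-1)^r Q_{r^*}$ from \e{rec2} to get $f|S=\sum_r (-1)^r c_r Q_{r^*}$. Since $r\mapsto r^*=2n-2-r$ is an involution of $\{0,\dots,2n-2\}$, I would substitute $\ell=r^*$; using that $2n-2$ is even, $(-1)^r=(-1)^{\ell^*}=(-1)^\ell$, so that $f|S=\sum_\ell (-1)^\ell c_{\ell^*} Q_\ell$. Adding $f$ gives $f|(1+S)=\sum_\ell\big(c_\ell+(-1)^\ell c_{\ell^*}\big)Q_\ell$, and linear independence of the $Q_\ell$ yields $c_\ell+(-1)^\ell c_{\ell^*}=0$ for every $\ell$, which is the asserted condition after renaming $\ell$ to $r$.

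For the $U$-statement I would apply the two formulas of \e{qqq} termwise, namely $Q_r|U=(-1)^r\sum_\ell \binom{\ell^*}{r}Q_\ell$ and $Q_r|U^2=(-1)^r\sum_\ell \binom{\ell}{r^*}Q_\ell$, then interchange the order of summation so that $f|U$ and $f|U^2$ each appear as a combination of the $Q_\ell$ with coefficients $\sum_r (-1)^r \binom{\ell^*}{r} c_r$ and $\sum_r (-1)^r \binom{\ell}{r^*} c_r$ respectively. Writing $f$ itself as $\sum_\ell\big(\sum_r \delta_{r,\ell}c_r\big)Q_\ell$ and summing the three contributions, the coefficient of $Q_\ell$ is exactly the bracketed expression in the statement. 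Linear independence of the $Q_\ell$ then delivers the equivalence.

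The only thing to watch is handling the re-indexing and the parity identity $(-1)^{r^*}=(-1)^r$ in the $S$-case, together with the order-of-summation swap in the $U$-case; I do not expect any genuine obstacle beyond this, since the action formulas \e{rec2}, \e{qqq} and the basis property of the $Q_\ell$ do all the work.
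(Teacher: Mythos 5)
Your proposal is correct and is exactly the paper's approach: the paper states that the proposition ``follows directly from \eqref{rec2} and \eqref{qqq},'' which is precisely your termwise application of the action formulas followed by regrouping and linear independence of the basis $Q_\ell$. The only cosmetic point is that in the $S$-case your condition $c_r+(-1)^r c_{r^*}=0$ matches the stated form $c_{r^*}+(-1)^r c_r=0$ only after multiplying through by $(-1)^r$, a triviality since $(-1)^{2r}=1$.
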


Let $\mathcal{W}_n$ be the subspace of $\mathcal{P}_n$ consisting of those $f$s  satisfying $f|(1+S)=0$ and $f|(1+U+U^2)=0$; these are the two-variable period polynomials of \cite[Sect. 6]{duke}.

\subsection{The polynomial $G_n(x)$}
For integers $m\gqs 0$  define
\begin{equation} \label{eee}
  \beta_m:= \zeta(-m)=(-1)^m \frac{B_{m+1}}{m+1},
\end{equation}
(following the convention in the references that $B_1=-1/2$).
Set $\xi_0=\xi_{2n-2}=0$ and for $1\lqs r\lqs 2n-3$,
\begin{equation} \label{xi}
   \xi_r :=   \beta_{2n-1}\left(\beta_{r}+\beta_{r^*} + \frac{\delta_{r,1}+\delta_{r^*,1}}{2n-2}\right)-\beta_{r}\beta_{r^*} .
\end{equation}
Note that $\xi_{r^*}=\xi_r$ and also $\xi_r=0$ for all  even indices $r$. With \e{ij} and \e{qrxd}, define
\begin{equation} \label{grx}
  G_n(x) := \jb_n \sum_{r=0}^{2n-2} \xi_r \cdot Q_r(x)\qquad \qquad (n \in \Z_{\gqs 1}),
\end{equation}
and this is equivalent to \cite[Eq. (2.19)]{duke}. Each $G_n(x)$ is in $\Q[x_1,x_2]$ with degree at most $2n-3$ and only odd degree terms. These polynomials  are symmetric and, by the right identities in \e{rec}, \e{rec2}, reciprocal:
\begin{equation*}
  G_n(x_1,x_2)= G_n(x_2,x_1), \qquad (x_1 x_2)^{n-1} G_n(1/x_1,1/x_2) = G_n(x_1,x_2).
\end{equation*}
 The first $n$ for which $G_n$ is not identically $0$ is $n=6$:
\begin{multline*}
 G_6(x)= 72 (x_1^5 x_2^4+ x_1^4 x_2^5+x_1+ x_2)-75 (x_1^5 x_2^2+ x_1^2 x_2^5+x_1^3+ x_2^3)
 +6 (x_1^5+ x_2^5) \\-375(x_1^4 x_2^3+ x_1^3 x_2^4+x_1^2 x_2+ x_1 x_2^2)+150 (x_1^4 x_2
 + x_1 x_2^4)+600 (x_1^3 x_2^2+ x_1^2 x_2^3).
\end{multline*}

\begin{prop} \label{last}
For all $n\gqs 1$ we have $G_n(x)$ in $\mathcal{W}_n$.
\end{prop}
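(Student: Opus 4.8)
The plan is to reduce everything to the explicit coefficient criterion furnished by Proposition \ref{uu}. By \e{grx} the polynomial $G_n$ is already written in the basis $\{Q_r\}$ with coefficients $c_r=\jb_n\,\xi_r$, and since $\jb_n>0$ we may divide it out; thus $G_n\in\mathcal{W}_n$ is equivalent to the scalars $\xi_r$ satisfying both relations of Proposition \ref{uu}. The whole proof therefore becomes the verification of two families of linear identities among the $\xi_r$.

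The relation $f|(1+S)=0$ demands $\xi_{r^*}+(-1)^r\xi_r=0$ for $0\lqs r\lqs 2n-2$, and this is immediate from the two facts recorded after \e{xi}. For even $r$ both $\xi_r$ and $\xi_{r^*}$ vanish (note $r^*=2n-2-r$ is then even as well), so the relation is $0=0$; for odd $r$ it reads $\xi_{r^*}=\xi_r$, which is precisely the stated symmetry. The endpoints $r=0,2n-2$ are covered by the convention $\xi_0=\xi_{2n-2}=0$.

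The real content is the relation $f|(1+U+U^2)=0$, which by Proposition \ref{uu} asks that
\[
  \xi_\ell+\sum_{r=0}^{2n-2}(-1)^r\left[\binom{\ell^*}{r}+\binom{\ell}{r^*}\right]\xi_r=0 \qquad (0\lqs\ell\lqs 2n-2).
\]
I would substitute the definition \e{xi} and exploit the trivial zeros $\beta_{2k}=\zeta(-2k)=0$ for $k\gqs1$ coming from \e{eee}: these force $\beta_r$, hence the whole of $\xi_r$, to vanish for even interior $r$, so that only odd indices (where $(-1)^r=-1$) survive. Writing $\xi_r$ as the sum of its linear part $\beta_{2n-1}(\beta_r+\beta_{r^*})$, the Kronecker correction $\beta_{2n-1}(\delta_{r,1}+\delta_{r^*,1})/(2n-2)$, and the quadratic part $-\beta_r\beta_{r^*}$, the identity splits into a part proportional to $\beta_{2n-1}$ and a purely quadratic part, which I expect to establish separately. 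The linear and delta pieces reduce to binomial-weighted sums $\sum_r(-1)^r\binom{m}{r}\beta_r$ that can be collapsed using the defining recurrence of the Bernoulli numbers, the delta terms supplying exactly the correction forced by the special index $r=1$.

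The main obstacle is the quadratic identity, namely that the binomial-weighted convolution $\sum_{r}(-1)^r\big[\binom{\ell^*}{r}+\binom{\ell}{r^*}\big]\beta_r\beta_{r^*}$ reproduces $\beta_\ell\beta_{\ell^*}$ up to the linear remainder. For this I would invoke a reciprocity relation for the products $\beta_r\beta_{r^*}=\zeta(-r)\zeta(-r^*)$ with $r+r^*=2n-2$ held fixed, that is, an Euler-type convolution identity for Bernoulli numbers, to rewrite the convolution in closed form and force the cancellation. Particular care is needed at the boundary: because $\xi_0$ and $\xi_{2n-2}$ are assigned the value $0$ rather than the value produced by \e{xi}, the $r=0$ and $r=2n-2$ terms must be tracked by hand and the convolution identity checked to be compatible with this truncation. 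I anticipate that this quadratic reciprocity, together with the boundary and delta bookkeeping, is the crux, while the $S$-relation and the linear contributions are routine.
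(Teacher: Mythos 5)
Your overall strategy coincides with the paper's: reduce to the coefficient criterion of Proposition \ref{uu}, dispose of the $S$-relation via the symmetry $\xi_{r^*}=\xi_r$ and the vanishing at even $r$, and split the $U$-relation into a $\beta_{2n-1}$-linear part, a Kronecker-delta part, and a quadratic part. The gap lies in what you invoke to close the two resulting identities. First, the linear part does not consist only of sums of the shape $\sum_r(-1)^r\binom{m}{r}\beta_r$: after re-indexing, the cross terms $\sum_r(-1)^r\bigl[\binom{\ell}{r^*}\beta_r+\binom{\ell^*}{r}\beta_{r^*}\bigr]$ become sums over \emph{shifted} Bernoulli numbers, $\sum_j(-1)^j\binom{\ell}{j}\beta_{\ell^*+j}+\sum_j(-1)^j\binom{\ell^*}{j}\beta_{\ell+j}$, and these cannot be collapsed by the defining recurrence; the paper evaluates exactly this combination in \e{sim} using Saalsch\"utz's reciprocity \e{recip} with $k=\ell$, $m=\ell^*$, obtaining $-(-1)^{\ell}\ell!\,\ell^*!/(2n-1)!$. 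Only the two diagonal sums, over $\binom{\ell^*}{r}\beta_r$ and $\binom{\ell}{r^*}\beta_{r^*}$, reduce to the $m=0$ case of \e{recip}, which is the recurrence you describe.

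Second, and more seriously, the quadratic identity you defer to ``an Euler-type convolution'' is precisely where the proof lives, and the classical Euler convolution does not apply: in $\sum_r(-1)^r\bigl[\binom{\ell^*}{r}+\binom{\ell}{r^*}\bigr]\beta_r\beta_{r^*}$ the binomial weights are indexed by $\ell$, $\ell^*$ while the product indices are $r$, $r^*$, so one needs a genuine two-index reciprocity rather than a single-weight convolution. The paper closes this step with the Nielsen--Zagier relation \e{recip2} (again with $k=\ell$, $m=\ell^*$, after the same re-indexing as in \e{sim}), whose right-hand side produces both the desired $\beta_\ell\beta_{\ell^*}$ term and the $\beta_{2n-1}$-proportional remainder that cancels against the linear and delta pieces. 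Since you neither state nor prove such an identity, your argument assumes the crux rather than establishing it. Supplying \e{recip} and \e{recip2} (or the generalization in \cite[Thm. 1]{AD08}), and then carrying out the boundary verification when $\ell=0$ or $\ell^*=0$ (where the right side of \e{try} must be read as $0$ because $\xi_0=\xi_{2n-2}=0$ by fiat), would complete your outline along exactly the paper's lines.
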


This important property of $G_n(x)$ is required for the proof of \e{dkm} below and stated in \cite[Lemma 13]{duke}. We may give a relatively simple proof based on the reciprocity relations for Bernoulli numbers  discussed in \cite{AD08}. These relations are interesting in their own right.

The first is due to Saalsch\"utz in 1892:
for all  $k$, $m$ in $\Z_{\gqs 0}$,
\begin{equation}\label{recip}
   \sum_j (-1)^j \binom{k}{j} \beta_{m+j} +  \sum_j (-1)^j \binom{m}{j} \beta_{k+j} = -\frac{ k! m!}{(k+m+1)!}.
\end{equation}
 Note that our $\beta_m$ equals $(-1)^{m}$ times the $\beta_m$ appearing in \cite{AD08}. Where the range of summation  is not specified, as in \e{recip},  we mean that it is  over the finite set of integers where the contained binomial coefficients are defined and non-zero.

The second reciprocity relation follows from a Bernoulli polynomial identity of Nielsen from 1923, as described in \cite{AD08}. For all  $k$, $m$ in $\Z_{\gqs 0}$,
\begin{multline}\label{recip2}
   \sum_j (-1)^j \binom{k}{j} \beta_{m+j}\beta_{k-j} +  \sum_j (-1)^j \binom{m}{j} \beta_{k+j} \beta_{m-j}\\
   = \beta_k \beta_m -\left(\frac{ k! m!}{(k+m+1)!} +\frac{(-1)^k}{k+1}+\frac{(-1)^m}{m+1}\right)\beta_{k+m+1}.
\end{multline}
Zagier also proved the underlying Bernoulli polynomial identity in the appendix %`Curious and Exotic Identities for Bernoulli Numbers',
to \cite{bern}. A generalization of \e{recip2} is provided in \cite[Thm. 1]{AD08}.

\begin{proof}[Proof of Proposition \ref{last}]
Since $G_1=0$ we may assume that $n\gqs 2$.  Proposition \ref{uu} implies that $G_n|(1+S)=0$ and that proving $G_n|(1+U+U^2)=0$ is equivalent to showing
\begin{equation}\label{try}
 \sum_{r=0}^{2n-2} \left[ (-1)^r \binom{\ell^*}{r}+(-1)^r \binom{\ell}{r^*}\right]\xi_r = -\xi_\ell,
\end{equation}
for all $\ell$ with $0\lqs \ell \lqs 2n-2$.
The left side of \e{try} has the form
\begin{equation*}
  -\frac{\beta_{2n-1}}{2n-2} S_1+ \beta_{2n-1} S_2 - S_3,
\end{equation*}
for
\begin{align*}
  S_1 & = \binom{\ell}{1}+ \binom{\ell}{1^*}+\binom{\ell^*}{1}+ \binom{\ell^*}{1^*},\\
  S_2 & = \sum_{r=1}^{2n-3} (-1)^r\left[  \binom{\ell}{r^*}\beta_{r^*} +\binom{\ell^*}{r}\beta_r+ \binom{\ell}{r^*}\beta_r + \binom{\ell^*}{r}\beta_{r^*}\right],\\
  S_3 & = \sum_{r=1}^{2n-3} (-1)^r\left[ \binom{\ell}{r^*}\beta_r \beta_{r^*}  + \binom{\ell^*}{r}\beta_r\beta_{r^*}\right].
\end{align*}

Starting with $S_2$, extending the sum to all $r$ gives
\begin{equation} \label{s2}
  S_2  = 1+\frac{\delta_{\ell,0}+\delta_{\ell^*,0}}2 +\sum_{r} (-1)^r\left[  \binom{\ell}{r^*}\beta_{r^*} +\binom{\ell^*}{r}\beta_r+ \binom{\ell}{r^*}\beta_r + \binom{\ell^*}{r}\beta_{r^*}\right],
\end{equation}
since $\beta_{2n-2}=0$. Taking $m=0$ in
\e{recip} means
\begin{equation*}
  \sum_j (-1)^j \binom{k}{j} \beta_{j} = -\beta_{k}  -\frac{1}{k+1},
\end{equation*}
giving the first two sums in \e{s2}. Next, with $k=\ell$ and $m=\ell^*$ in
\e{recip} we find, after replacing the first index $j$ by $\ell-r$ and the second $j$ by $\ell^*-r$,
\begin{equation} \label{sim}
  \sum_{r} (-1)^r\left[   \binom{\ell}{r^*}\beta_r + \binom{\ell^*}{r}\beta_{r^*}\right]= -\frac{(-1)^{\ell}\ell! \ell^*!}{(2n-1)!}.
\end{equation}
Therefore
\begin{equation*}
  S_2=1+\frac{\delta_{\ell,0}+\delta_{\ell^*,0}}2
  -\beta_{\ell}  -\frac{1}{\ell+1}-\beta_{\ell^*}  -\frac{1}{\ell^*+1} -\frac{(-1)^{\ell}\ell! \ell^*!}{(2n-1)!}.
\end{equation*}
The sum $S_3$ is obtained from \e{recip2}, similarly to \e{sim}:
\begin{equation*}
  S_3= (-1)^\ell \beta_{\ell}\beta_{\ell^*}
  -\beta_{2n-1}\left(\frac{(-1)^{\ell}\ell! \ell^*!}{(2n-1)!}  +\frac{1}{\ell+1}+\frac{1}{\ell^*+1}\right).
\end{equation*}

The identity \e{try} that we are trying to prove becomes
\begin{multline} \label{big}
  -\frac{\beta_{2n-1}}{2n-2} \left[\binom{\ell}{1}+ \binom{\ell}{1^*}+\binom{\ell^*}{1}+ \binom{\ell^*}{1^*}\right]\\
  + \beta_{2n-1} \left[1+\frac{\delta_{\ell,0}+\delta_{\ell^*,0}}2
  -\beta_{\ell}  -\frac{1}{\ell+1}-\beta_{\ell^*}  -\frac{1}{\ell^*+1} -\frac{(-1)^{\ell}\ell! \ell^*!}{(2n-1)!} \right]  \\
  + (-1)^{\ell+1} \beta_{\ell}\beta_{\ell^*}
  +\beta_{2n-1}\left(\frac{(-1)^{\ell}\ell! \ell^*!}{(2n-1)!}  +\frac{1}{\ell+1}+\frac{1}{\ell^*+1}\right)
  \\
   = - \beta_{2n-1}\left(\beta_{\ell}+\beta_{\ell^*} + \frac{\delta_{\ell,1}+\delta_{\ell^*,1}}{2n-2}\right)+\beta_{\ell}\beta_{\ell^*},
\end{multline}
where the right side is replaced by $0$ if $\ell$ or $\ell^*$ equal $0$.
After cancelling, the left side of \e{big} reduces to
\begin{align*}
  -\frac{\beta_{2n-1}}{2n-2} \left[\binom{\ell}{1}+ \binom{\ell}{1^*}  +\binom{\ell^*}{1}+ \binom{\ell^*}{1^*}\right]&\\
  + \beta_{2n-1} &\left[1+\frac{\delta_{\ell,0}+\delta_{\ell^*,0}}2
  -\beta_{\ell}  -\beta_{\ell^*} \right]
   -(-1)^{\ell} \beta_{\ell}\beta_{\ell^*},
\end{align*}
and it may be verified that this is $0$ when $\ell=0$ or $\ell^*=0$, as desired. Cancelling further in \e{big} when $1\lqs \ell \lqs 1^*$ yields
\begin{equation*}
  \frac{1}{2n-2} \left[\binom{\ell}{1}+ \binom{\ell}{1^*}+\binom{\ell^*}{1}+ \binom{\ell^*}{1^*}\right]
  = 1+ \frac{\delta_{\ell,1}+\delta_{\ell^*,1}}{2n-2},
\end{equation*}
which is indeed true.
\end{proof}
%\cite{AD08}

\section{Expressing $\Psi_n$ in terms of $G_n$} \label{ddu}

Any hyperbolic conjugacy class in $\G/\{\pm I\}$ has a representative
\begin{equation} \label{tv}
  A=T^{n_1}V^{m_1} \cdots T^{n_r}V^{m_r},
\end{equation}
with all powers positive and a total of $p=n_1+m_1+ \cdots +n_r+m_r$ group elements.
Cyclic permutations of \e{tv} are made by removing  elements from the left and placing them on the right. We obtain $\tilde{A}_1=A$, $\tilde{A}_2=T^{-1}A T$, and so on. %Let $\nu_j$ and $\nu'_j$ be the fixed points of $A^*_{j}$.
Also let $\tilde{c}_j$ and $\tilde{b}_j$  be the integers $c_{\tilde{A}_j}/\u(\tilde{A}_j)$ and $b_{\tilde{A}_j}/\u(\tilde{A}_j)$, respectively.

Using $V=TST$ in \e{tv} and replacing $A$ by $T^{-1}AT$ gives
\begin{equation} \label{tv2}
  A=T^{k_1} S T^{k_2} S \cdots S T^{k_q} S,
\end{equation}
with all powers at least $2$. Construct cyclic permutations of \e{tv2} slightly differently by removing blocks $T^{k_i} S$ from the left and placing them on the right.
This gives $A_1=A$, $A_2=(T^{k_1} S)^{-1} A T^{k_1} S$, etc.
Let $\omega_j$ and $\omega'_j$ be the fixed points of $A_{j}$. Also set $c_j$  to be $c_{A_j}/\u(A_j)$.
Note that since $\u$ is invariant under conjugation, \cite[Lemma 4]{duke}, we have $\u(A_j)= \u(\tilde{A}_j) = \u(A)$.

The key formula from Lemma 8 and Theorem 2 of \cite{duke} is established there using an interesting two-variable version of Eichler-Shimura cohomology. With the above notation and for $n\gqs 2$, it says
\begin{equation}\label{dkm}
  \Psi_n(A) = \ib_n \sum_{j=1}^{p} \left( (\tilde{c}_j)^{n-1} - (-\tilde{b}_j)^{n-1}\right) + \sum_{j=1}^{q}  c_j^{n-1} G_n(\omega_j, \omega_j').
\end{equation}
Duke  mentions in \cite[Sect. 2]{duke} that for each $n \gqs 2$, $\Psi_n(A)$ always being an integer is implied by the polynomial $G_n$ having integer coefficients. For completeness we prove this next.
\begin{prop} \label{ik}
For each  $n\gqs 2$, we have that $\Psi_n$ is integer valued if %and only if
\begin{equation} \label{gnx}
   G_n(x) \in \Z[x_1,x_2].
\end{equation}
\end{prop}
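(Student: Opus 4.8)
The plan is to read off integrality directly from the key formula \e{dkm}, using the hypothesis \e{gnx} to control the only term that is not manifestly an integer. The right-hand side of \e{dkm} splits as $\ib_n \sum_{j=1}^p \left((\tilde c_j)^{n-1}-(-\tilde b_j)^{n-1}\right) + \sum_{j=1}^q c_j^{n-1} G_n(\omega_j,\omega_j')$. The first sum is immediately an integer: $\ib_n \in \Z$ by definition \e{ij}, and each $\tilde c_j, \tilde b_j$ is an integer because they are defined as $c_{\tilde A_j}/\u(\tilde A_j)$ and $b_{\tilde A_j}/\u(\tilde A_j)$, where $\u=\gcd(b,c,d-a)$ divides both $b$ and $c$. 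Since $n\gqs 2$, the powers $(n-1)$ are nonnegative integers, so this whole first sum lies in $\Z$. Thus everything reduces to showing the second sum is an integer.

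\medskip
The main obstacle, and the crux of the argument, is the second sum $\sum_{j=1}^q c_j^{n-1} G_n(\omega_j,\omega_j')$, because the fixed points $\omega_j,\omega_j'$ are irrational quadratic numbers (they are the fixed points of the hyperbolic element $A_j$), so a priori $G_n(\omega_j,\omega_j')$ need not even be rational. The resolution should exploit the structure of $G_n$ together with the algebraic nature of the fixed points. First I would note that $\omega_j$ and $\omega_j'$ are the two roots of the same quadratic $c_{A_j} z^2 + (d_{A_j}-a_{A_j}) z - b_{A_j}=0$, hence they are Galois conjugates: $\omega_j+\omega_j' = (a_{A_j}-d_{A_j})/c_{A_j}$ and $\omega_j \omega_j' = -b_{A_j}/c_{A_j}$ are rational. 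Because $G_n$ is a symmetric polynomial in $x_1,x_2$ (as stated right after \e{grx}), $G_n(\omega_j,\omega_j')$ is a symmetric function of the pair of conjugate roots and therefore lies in $\Q$, not merely in the quadratic field. So under the hypothesis \e{gnx}, $G_n(\omega_j,\omega_j')$ is at least a rational number.

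\medskip
The remaining point is to clear denominators using the factor $c_j^{n-1}$ and the homogeneity and degree bounds on $G_n$. Here I would use that $G_n$ has total degree at most $2n-3$ with only odd-degree terms, is symmetric, and satisfies the reciprocity relation $(x_1 x_2)^{n-1} G_n(1/x_1,1/x_2)=G_n(x_1,x_2)$ noted after \e{grx}; these constrain the monomials $x_1^u x_2^\v$ appearing in $G_n$ to have $1\lqs u+\v \lqs 2n-3$. Writing $\omega_j = \alpha/c_{A_j}$-type expressions in terms of the integer entries of $A_j$ (with $c_j = c_{A_j}/\u(A_j)$), each monomial evaluated at $(\omega_j,\omega_j')$ produces a denominator that is a power of $c_{A_j}$ of exponent at most $2n-3$; the reciprocal symmetry pairs a monomial of degree $u+\v$ with one whose complementary degree is $2n-2-(u+\v)$, so after multiplying by the prefactor one sees the relevant denominator power is bounded by $n-1$. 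Consequently $c_j^{n-1} G_n(\omega_j,\omega_j')$ is an integer for each $j$, and summing over $j$ keeps us in $\Z$. Combining this with the integrality of the first sum in \e{dkm} gives $\Psi_n(A)\in\Z$ for every hyperbolic $A$; the parabolic and elliptic cases are handled by the properties recorded just before \e{zet} (elliptic elements map to $0$ and parabolic elements to integer multiples of $\ib_n$), which completes the argument.
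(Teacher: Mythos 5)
Your reduction matches the paper's: restrict to hyperbolic $A$ (elliptic and parabolic elements are covered by the properties quoted in the introduction), note that the first sum in \e{dkm} is trivially in $\Z$, and reduce to showing $c_j^{n-1}G_n(\omega_j,\omega_j')\in\Z$, using symmetry of $G_n$ and the rationality of $\omega_j+\omega_j'=(a_j-d_j)/c_j$ and $\omega_j\omega_j'=-b_j/c_j$. Up to the conclusion $G_n(\omega_j,\omega_j')\in\Q$ this is all fine. The gap is in your final denominator-clearing step: the inference ``the monomials have degrees in $[1,2n-3]$ and occur in complementary-degree pairs, hence the prefactor $c_j^{n-1}$ clears the denominators'' is a non sequitur, and it is exactly where the content of the proposition lies. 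A polynomial can satisfy every property you actually invoke and still fail the conclusion: take $n=3$ and $f(x)=x_1^3+x_2^3+x_1+x_2$. It is symmetric, has only odd-degree terms, all degrees lie in $[1,2n-3]=[1,3]$, and its monomials occur in complementary-degree pairs ($3$ with $2n-2-3=1$) with equal coefficients; yet
\begin{equation*}
  c^2 f(\omega,\omega') \;=\; \frac{(a-d)^3}{c}+3b(a-d)+c(a-d),
\end{equation*}
which is not an integer whenever $c\nmid(a-d)^3$, e.g.\ for the hyperbolic matrix $\left(\begin{smallmatrix}50&29\\81&47\end{smallmatrix}\right)$ where it equals $\tfrac13+504$. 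The point is that $f$ is \emph{not} reciprocal: the reciprocal image of $x_1^3$ under $(x_1x_2)^{n-1}f(1/x_1,1/x_2)$ is $x_2^2/x_1$, not a monomial. The consequence of reciprocity that your argument needs, but never extracts, is that each nonzero monomial $x_1^ux_2^v$ of $G_n$ must have a partner $x_1^{n-1-u}x_2^{n-1-v}$ with \emph{nonnegative} exponents, forcing every individual exponent to satisfy $u,v\lqs n-1$. The total-degree bound $2n-3$ and the degree pairing are too weak to give this.

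With that exponent bound the proof does close, and this is essentially what the paper does: by \e{qrx}, $G_n$ is an integer linear combination of terms $(x_1x_2)^m(x_1^\ell+x_2^\ell)$ with $m+\ell\lqs n-1$ and $\ell\gqs 1$; the classical identity \e{su}, whose coefficients are verified to be integers, rewrites each such term as an integer combination of $(x_1+x_2)^{\ell-2k}(x_1x_2)^{m+k}$; substituting the rational values above produces denominators $c_j^{(\ell-2k)+(m+k)}=c_j^{m+\ell-k}$ of exponent at most $n-1$, which the prefactor cancels. In your algebraic language the same repair reads: $c_j\omega_j$ and $c_j\omega_j'$ are algebraic integers (roots of $X^2-(a_j-d_j)X-b_jc_j$), and for a symmetric pair with $u\gqs v$ one has
\begin{equation*}
  c_j^{n-1}\bigl(\omega_j^u(\omega_j')^v+\omega_j^v(\omega_j')^u\bigr)
  \;=\;(-b_j)^{v}\,c_j^{\,n-1-u}\bigl((c_j\omega_j)^{u-v}+(c_j\omega_j')^{u-v}\bigr),
\end{equation*}
which is a rational integer precisely because $u\lqs n-1$. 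So your outline follows the paper's route, but as written the crucial step fails; it must be replaced by the exponent bound coming from reciprocity (or the paper's explicit representation from \e{qrx}) together with an integer-coefficient symmetric-function identity such as \e{su}.
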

\begin{proof}
Suppose that \e{gnx} is true. We may assume that $A$ is hyperbolic and, by \e{dkm}, we need only show that $c_j^{n-1} G_n(\omega_j, \omega_j')$ is always an integer. The symmetric polynomial $G_n(x_1,x_2)$ may be expressed as an element of $\Z[e_1,e_2]$ for the elementary symmetric polynomials $e_1=x_1+x_2$ and $e_2=x_1 x_2$ . To see this explicitly, note that $G_n(x_1,x_2)$ has terms of the form an integer times $(x_1 x_2)^m(x_1^\ell+x_2^\ell)$ for $m+\ell \lqs n-1$ and $\ell\gqs 1$. This follows from \e{qrx} since $G_n$ is a linear combination of the polynomials $Q_r$ for $r$ odd. Then
\begin{equation} \label{su}
  (x_1 x_2)^m(x_1^\ell+x_2^\ell) = \sum_{k=0}^{\ell/2} (-1)^k \frac{\ell}{\ell-k} \binom{\ell-k}{k}(x_1+x_2)^{\ell-2k}(x_1 x_2)^{m+k},
\end{equation}
by a classic result from symmetric functions \cite[Eq. (1)]{Gou}. The coefficients in \e{su} are integers since
\begin{equation*}
  \frac{\ell}{\ell-k} \binom{\ell-k}{k} = \binom{\ell-k}{k}+\binom{\ell-k-1}{k-1} \qquad (\ell>k>0).
\end{equation*}
 Write $a_j$, $b_j$ and $d_j$ for the other entries of $A_j$ divided by $\u(A_j)$. Along with $c_j$, these are all in $\Z$ and we have
\begin{equation*}
  \omega_j + \omega_j' = \frac{a_j-d_j}{c_j}, \qquad \omega_j \omega_j' = -\frac{b_j}{c_j}.
\end{equation*}
Hence,
\begin{equation*}
  (\omega_j \omega_j')^m(\omega_j^\ell+(\omega_j')^\ell) = \sum_{k=0}^{\ell/2} (-1)^k \frac{\ell}{\ell-k} \binom{\ell-k}{k}\left(\frac{a_j-d_j}{c_j}\right)^{\ell-2k}\left(-\frac{b_j}{c_j}\right)^{m+k}.
\end{equation*}
The largest possible power of $c_j$ in the denominator of the right side is $m+\ell$ and this is cancelled by the outside factor $c_j^{n-1}$.
\end{proof}

\begin{conj} \cite[Sect. 2]{duke} \label{gn-con}
For every $n\gqs 2$ we have $G_n(x) \in \Z[x_1,x_2]$.
\end{conj}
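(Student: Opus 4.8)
The plan is to reduce the integrality of $G_n$ to a prime-by-prime lower bound on the numbers $\jb_n\xi_r$, and then to prove that bound by combining the von Staudt--Clausen theorem, the structural cancellation coming from the identity relating the two indices, and the (generalized) Kummer congruences of Section \ref{divb}. First I would exploit the homogeneity of the $Q_r$. By \e{qrx} each $Q_r$ lies in $\Z[x_1,x_2]$ and is homogeneous of degree $r^*=2n-2-r$, so distinct $r$ contribute monomials of distinct total degree; hence every monomial $x_1^\alpha x_2^\gamma$ of $G_n$ comes from the single index $r=2n-2-\alpha-\gamma$, and reading off its coefficient from \e{grx} and \e{qrx} gives $\jb_n\xi_r\,(-1)^r\binom{n-1}{\alpha}\binom{n-1}{\gamma}$ with $\alpha+\gamma=r^*$ and $0\lqs\alpha,\gamma\lqs n-1$. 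Thus $G_n\in\Z[x_1,x_2]$ is equivalent to
\begin{equation*}
 \jb_n\,\xi_r\binom{n-1}{\alpha}\binom{n-1}{\gamma}\in\Z \qquad (\alpha+\gamma=r^*,\ 0\lqs\alpha,\gamma\lqs n-1),
\end{equation*}
for every odd $r$ in $[1,2n-3]$ (even $r$ give $\xi_r=0$). It is essential that $\jb_n\xi_r$ need not itself be an integer --- for instance $\jb_6\xi_1=-72/5$ and $\jb_6\xi_3=15/2$ --- so the binomial factors must supply the missing powers of each prime, and the whole argument is about balancing these against the denominator of $\jb_n\xi_r$.

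Fixing a prime $p$ and writing $v_p$ for the $p$-adic valuation, it then suffices to show $v_p(\jb_n\xi_r)\gqs -\mu_p(r)$, where $\mu_p(r):=\min_{\alpha+\gamma=r^*}\bigl(v_p\binom{n-1}{\alpha}+v_p\binom{n-1}{\gamma}\bigr)$. I would set $a=r+1$ and $b=r^*+1$, so that $a+b=2n$ and, by \e{eee}, $\beta_r=\zeta(1-a)$, $\beta_{r^*}=\zeta(1-b)$, $\beta_{2n-1}=\zeta(1-2n)$. The Kummer--von Staudt theory gives $v_p(\zeta(1-m))=-1-v_p(m)$ when $(p-1)\mid m$ and $v_p(\zeta(1-m))\gqs 0$ otherwise; in particular $v_p(\jb_n)=1+v_p(2n)$ if $(p-1)\mid 2n$ and $v_p(\jb_n)=0$ if not. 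Since $a+b=2n$, the equivalence $(p-1)\mid a\iff(p-1)\mid b$ holds whenever $(p-1)\mid 2n$, while at most one of $a,b$ is divisible by $p-1$ when $(p-1)\nmid 2n$. This trichotomy organizes the estimate of $v_p(\xi_r)$ from \e{xi}.

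The heart of the matter is the lower bound for $v_p(\xi_r)$, and this is where the cancellation and the congruences enter. When $(p-1)\nmid 2n$ and exactly one of $a,b$ --- say $a$ --- is divisible by $p-1$, I would regroup \e{xi} as $\xi_r=\zeta(1-a)\bigl(\zeta(1-2n)-\zeta(1-b)\bigr)+\zeta(1-2n)\zeta(1-b)+\zeta(1-2n)E$, where $E=(\delta_{r,1}+\delta_{r^*,1})/(2n-2)$; since $2n\equiv b\pmod{(p-1)p^{v_p(a)}}$, the classical Kummer congruence forces $v_p\bigl(\zeta(1-2n)-\zeta(1-b)\bigr)\gqs 1+v_p(a)$, which exactly offsets the pole $v_p(\zeta(1-a))=-1-v_p(a)$. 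When $(p-1)\mid 2n$ and both $a,b$ are divisible by $p-1$, all three factors are singular; here I would substitute the von Staudt decomposition $\zeta(1-m)=1/(pm)+R_m$ with $v_p(R_m)\gqs -v_p(m)$, and observe that the most singular $1/p^2$ part cancels precisely because $\tfrac1{2n}\bigl(\tfrac1a+\tfrac1b\bigr)=\tfrac1{ab}$, i.e.\ because $a+b=2n$. The surviving subleading terms are then controlled by the form of the Kummer congruences valid when $(p-1)\mid m$ --- Cohen's extension from Section \ref{divb}, the classical version being inapplicable, and in particular covering $p=2$, where $(p-1)\mid m$ always. The last case, $(p-1)\mid 2n$ with $(p-1)\nmid a,b$, is immediate apart from the pole carried by the term $\zeta(1-2n)/(2n-2)$.

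Finally I would bound $\mu_p(r)$ below using Kummer's theorem that $v_p\binom{n-1}{\alpha}$ counts the carries when $\alpha$ is added to $n-1-\alpha$ in base $p$, and check that these carries supply exactly the powers of $p$ (essentially $v_p(a)$ or $v_p(b)$, plus one unit from the $E$ and small-prime contributions) left uncleared after the cancellation above. I expect the principal obstacle to be precisely this coordination: establishing the generalized Kummer--Cohen congruence with enough $p$-adic precision in the doubly-singular case $(p-1)\mid a,b,2n$, and then proving that the minimal carry count $\mu_p(r)$ always meets the residual deficit $v_p(\jb_n\xi_r)$ uniformly in $\alpha$ and $\gamma$. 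The terms $E$ of \e{xi} together with the primes $p=2,3$, where $p-1$ divides small indices, are the delicate boundary cases that will need to be settled separately.
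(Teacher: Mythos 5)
Your overall architecture is the same as the paper's: your monomial\mbox{-}by\mbox{-}monomial reduction is exactly Proposition \ref{all} (equation \e{jk}), your trichotomy of primes according to which of $r+1$, $r^*+1$, $2n$ are divisible by $p-1$ is the paper's partition into the sets $M$, $L$, $K$, $P$ in the proof of Theorem \ref{main}, and your singly-singular case --- the regrouping $\zeta(1-a)\bigl(\zeta(1-2n)-\zeta(1-b)\bigr)+\zeta(1-2n)\zeta(1-b)$, the classical Kummer congruence, and the binomial valuations absorbing what is left over --- is precisely the paper's argument at \e{b3b}, \e{eb}, \e{vpb}. The genuine gap is in your doubly-singular case, and it is not a ``delicate boundary'' issue to be settled separately: since $r+1$, $r^*+1$ and $2n$ are all even, the primes $p=2,3$ \emph{always} lie in this case, so it is the heart of the proof.

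There your declared inputs are the crude von Staudt--Clausen decomposition $\zeta(1-m)=\frac{1}{pm}+R_m$ with only $\nu_p(R_m)\gqs -\nu_p(m)$, plus Theorem \ref{kummer}; these do not suffice. Write $a=r+1$, $b=r^*+1$, $Z=\zeta(1-2n)$, $D_a=\zeta(1-a)-Z$, $D_b=\zeta(1-b)-Z$. The quadratic part of $\xi_r$ is identically $Z^2-D_aD_b$, and Theorem \ref{kummer} --- whose precision for these pairs is capped at $p^{1+\min(\nu_p(a),\nu_p(b))}$ because $2n-a=b$ and $2n-b=a$ --- pins down $D_aD_b$ as $\frac{(p-1)^2}{4n^2p^2}$ up to $p$-integral quantities and error terms your other estimates already control. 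So everything rests on showing $\nu_p\bigl(Z^2-\frac{(p-1)^2}{4n^2p^2}\bigr)\gqs -1-\nu_p(2n)$, i.e.\ on the factor $Z+\frac{p-1}{2np}$ being $p$-integral. That statement \emph{is} Clarke's Theorem \ref{clar}, i.e.\ the inverse-residue congruence \e{cas2}, which you never invoke: bare von Staudt gives only $Z+\frac{p-1}{2np}=\frac{1}{2n}+R_{2n}$, hence valuation $\gqs -\nu_p(2n)$, a deficit of $\nu_p(2n)$. Nor can the binomial coefficients rescue this, since their valuations are tied to $\nu_p(a)$, $\nu_p(b)$ (Lemma \ref{bino}), not to $\nu_p(2n)$, and they can vanish identically: for $p=3$, $n=3^N$, every base-$3$ digit of $n-1$ is $2$, so every $\binom{n-1}{\alpha}$ is coprime to $3$ while $\nu_3(2n)=N$ is unbounded. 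Already for $n=9$, $r=5$ (so $a=6$, $b=12$, $\jb_9=2^2\cdot 3^3\cdot 7\cdot 19$, all $\binom{8}{\alpha}$ prime to $3$) one must prove $\nu_3(\xi_5)\gqs -3$ outright; both $\beta_{17}(\beta_5+\beta_{11})$ and $\beta_5\beta_{11}$ have $\nu_3=-4$, and the needed cancellation of their leading $3$-adic digits is exactly what \e{cas2} certifies and what von Staudt together with Kummer--Cohen at the available precision $3^{1+\nu_3(6)}=9$ cannot see. The repair is to replace von Staudt by Clarke's explicit fractional-part formula throughout the doubly-singular case --- this is what the paper does, via $m'a_p\equiv\ell'b_p\equiv k'c_p\equiv 1 \bmod p^{x+1}$ --- after which your $a+b=2n$ cancellation becomes the paper's congruence computation at \e{jj} and the argument closes.
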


So we see that Conjecture \ref{gn-con} implies Conjecture \ref{con}. It seems that Conjecture \ref{gn-con} is strictly stronger since a polynomial may send $\Z$ to $\Z$ without having integer coefficients, (for example $\binom{x}{3}$).
Now, by \e{qrx}, \e{grx} we have  $G_n(x) \in \Z[x_1,x_2]$ if and only if
\begin{equation}\label{jk}
  \jb_n \binom{n-1}{u}  \binom{n-1}{r-u} \xi_r \in \Z \qquad \text{for all $u$, $r$ with $ 0\lqs u \lqs r\lqs 2n-2$}.
\end{equation}
As the expression in \e{jk} is unchanged as $r \to r^*$ we need only consider odd $r$ in the range $1\lqs r\lqs n-1$.
Substituting \e{xi}, \e{eee} and simplifying a little finds:

\begin{prop} \label{all}
For all $n\gqs 3$, $G_n(x) \in \Z[x_1,x_2]$ is equivalent to
\begin{equation} \label{nec}
  \jb_n \binom{n-1}{u}  \binom{n-1}{v}\left( \frac{B_{2n}}{2n}\left( \frac{B_{2w}}{2w}+\frac{B_{2n-2w}}{2n-2w}\right) - \frac{B_{2w}}{2w}\frac{B_{2n-2w}}{2n-2w} \right)
  +\frac{\delta_{w,1}}2 \in \Z,
\end{equation}
for all nonnegative integers  $u$, $v$, $w$ with $1\lqs w \lqs n/2$ and $u+v= 2w-1$.
\end{prop}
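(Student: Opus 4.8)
The plan is to carry out the substitution announced just before the statement, converting the divisibility condition \e{jk} into the explicit Bernoulli-number condition \e{nec}; the only genuinely arithmetic input will be a parity fact about $\ib_n$. First I would record the reduced form of \e{jk}. Since the expression $\jb_n \binom{n-1}{u}\binom{n-1}{r-u}\xi_r$ is invariant under $r \mapsto r^*$ and vanishes for even $r$ (because $\xi_r=0$ there), it suffices to treat odd $r$ with $1\lqs r\lqs n-1$. Writing $r=2w-1$ and $v=r-u$, this is exactly the range $1\lqs w\lqs n/2$, $u+v=2w-1$. Here $r^*=2n-2-r=2(n-w)-1$ is again odd, and in this range $r^*\gqs n-1\gqs 2$, so $\delta_{r^*,1}=0$ throughout, while $\delta_{r,1}=\delta_{w,1}$.

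Next I would substitute \e{eee} into \e{xi}. For an odd index $m=2k-1$ one has $\beta_{2k-1}=-B_{2k}/(2k)$, hence $\beta_r=-B_{2w}/(2w)$, $\beta_{r^*}=-B_{2n-2w}/(2n-2w)$ and $\beta_{2n-1}=-B_{2n}/(2n)$. Feeding these into \e{xi} and using that only $\delta_{r,1}=\delta_{w,1}$ survives gives
\[
 \xi_r = \frac{B_{2n}}{2n}\left(\frac{B_{2w}}{2w}+\frac{B_{2n-2w}}{2n-2w}\right) - \frac{B_{2w}}{2w}\frac{B_{2n-2w}}{2n-2w} - \frac{B_{2n}}{2n}\cdot\frac{\delta_{w,1}}{2n-2}.
\]
Multiplying by $\jb_n\binom{n-1}{u}\binom{n-1}{v}$ and invoking $\jb_n B_{2n}/(2n)=-\ib_n$ from \e{ij}, the first two terms reproduce precisely the main parenthesis of \e{nec}, while the $\delta$-term becomes $\ib_n\binom{n-1}{u}\binom{n-1}{v}\delta_{w,1}/(2n-2)$.

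The remaining point, which is the only non-formal step, is to reconcile this last term with the $\delta_{w,1}/2$ appearing in \e{nec}. When $w=1$ we have $u+v=1$, so $\binom{n-1}{u}\binom{n-1}{v}=n-1$, and $(n-1)/(2n-2)=1/2$; hence the $\delta$-contribution equals $\ib_n/2$. Thus $\jb_n\binom{n-1}{u}\binom{n-1}{v}\xi_r\in\Z$ is equivalent to the main part plus $\ib_n/2$ lying in $\Z$, and this agrees with \e{nec} exactly when $\ib_n/2\equiv 1/2\pmod{\Z}$, i.e. when $\ib_n$ is odd.

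I expect the real content to sit in this parity of $\ib_n$. The cleanest route is the von Staudt--Clausen theorem: the prime $2$ always divides the denominator of $B_{2n}$ (since $p-1\mid 2n$ for $p=2$), so a $2$-adic valuation count shows $-B_{2n}/(2n)$ has even denominator; hence $\jb_n$, that denominator in lowest terms, is even, and its coprime numerator $\ib_n$ is therefore odd. With $\ib_n$ odd we have $\ib_n/2\equiv 1/2$, and the stated equivalence holds for all $n\gqs 3$. (For $w\neq 1$ the $\delta$-term vanishes and the equivalence is immediate, so no further cases arise.)
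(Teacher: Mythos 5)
Your proof is correct and follows essentially the same route as the paper, which dispatches this proposition with the single line ``Substituting \e{xi}, \e{eee} and simplifying a little finds'' and gives no further details. The one genuinely arithmetic point you isolate---that the substitution actually produces $\ib_n\,\delta_{w,1}/2$ rather than $\delta_{w,1}/2$, and that the two conditions agree modulo $\Z$ because $\jb_n$ is even (von Staudt, cf.\ \e{vs2}) and hence $\ib_n$ is odd---is exactly the simplification the paper leaves implicit, and you justify it correctly.
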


Studying \e{nec} numerically reveals that  the binomial coefficients are  needed to cancel denominators in a small proportion of cases.  For $r\in \Q_{\neq 0}$, let $\nu_p(r)$ be the usual $p$-adic valuation: $\nu_p(r)=m \in \Z$ when $r=p^m \alpha/\beta$ for $p\nmid \alpha,\beta$.

\begin{lemma} \label{bino}
Let  $u$, $v$, $w$ be  integers with $0\lqs u,v \lqs n-1$ and $u+v= 2w-1$ for  $1\lqs w \lqs n-1$. Then
\begin{align} \label{binh}
  \nu_p\left( \binom{n-1}{u}  \binom{n-1}{v}\right) &\gqs \begin{cases} \nu_p(2w)+\frac{2-(2n-2w)}{p-1}, \\
    \nu_p(2n-2w)+\frac{2-2w}{p-1}, %,\\ & 0.
  \end{cases}\\
\intertext{
for $p$  an odd prime, and}
  \nu_2\left( \binom{n-1}{u}  \binom{n-1}{v}\right) &\gqs \begin{cases} \nu_2(2w)-(n-w), \\
    \nu_2(2n-2w)-w.
  \end{cases} \label{bin2}
\end{align}
\end{lemma}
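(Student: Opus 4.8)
Throughout write $N=n-1$ and $k=\nu_p(2w)$, and recall Legendre's formula $\nu_p(m!)=(m-s_p(m))/(p-1)$, where $s_p(m)$ denotes the sum of the base-$p$ digits of $m$. The whole estimate will come from locating a single, highly divisible factor inside one of the two binomial coefficients. Since $u+v=2w-1$, at least one of $u,v$ is $\leq w-1$; say $v\leq w-1$. As $1\leq w\leq N$, the integer $w$ then occurs among $v+1,\dots,N$, so it is one of the factors of the numerator $N!/v!$ of $\binom{N}{v}$. Hence $\nu_p(N!/v!)\geq \nu_p(w)$ and, by Legendre,
\[
 \nu_p\binom{N}{v}=\nu_p(N!/v!)-\nu_p((N-v)!)\geq \nu_p(w)-\frac{(N-v)-1}{p-1}.
\]
From $u\leq N$ and $u+v=2w-1$ one has $N-v\leq 2(N-w)+1$, so the subtracted term is at most $2(N-w)/(p-1)$.

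For odd $p$ this already yields the first inequality in \e{binh}: there $\nu_p(w)=\nu_p(2w)=k$, and $2(N-w)/(p-1)=(2n-2w-2)/(p-1)$, while $\nu_p\binom{N}{u}\geq0$. The second inequality in \e{binh} then follows by applying the first to the triple $(N-u,N-v,n-w)$, since the involution $u\mapsto N-u$, $v\mapsto N-v$, $w\mapsto n-w$ fixes $\binom{N}{u}\binom{N}{v}$ and exchanges the two bounds. The key point is that for odd $p$ the lone factor $w$ already carries the full prime power $p^{k}$, so no further information about the binomials is needed.

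The main work is the prime $p=2$, where $\nu_2(w)=k-1$ and the crude estimate above loses a factor of two, because $N!/v!$ contains many even factors besides $w$ which the bound discards. Here the plan is: if $\nu_2(w)\leq N-w$ the first bound in \e{bin2} is immediate from $\nu_2\binom{N}{u}\binom{N}{v}\geq0$, so assume $j:=\nu_2(w)>N-w$ and set $\rho:=N-w$, giving $0\leq\rho<j\leq 2^{j-1}$. Then $\rho=N\bmod 2^{j}$ and $N=q2^{j}+\rho$ with $q=w/2^{j}$ odd. Put $T:=\max(N-u,N-v)$, so that $\binom{N}{T}$ is one of $\binom{N}{u},\binom{N}{v}$ and $\rho<T\leq 2\rho+1<2^{j}$; write $T-1=\rho+\beta$ with $0\leq\beta\leq\rho$. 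The run $N-T+1,\dots,N$ underlying $\binom{N}{T}$ lies within distance $2^{j}$ of $w$, so it contains no multiple of $2^{i}$ for $i>j$; counting, for each $i\leq j$, the multiples of $2^{i}$ in the run (one of which is always $w$) gives the exact evaluation
\[
 \nu_2\binom{N}{T}=j-\Big(\nu_2((\rho+\beta+1)!)-\nu_2(\rho!)-\nu_2(\beta!)\Big),
\]
so that the first bound in \e{bin2} reduces to the elementary inequality
\[
 \nu_2((\rho+\beta+1)!)-\nu_2(\rho!)-\nu_2(\beta!)\leq\rho\qquad(0\leq\beta\leq\rho).
\]

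To finish, by Legendre the left-hand side equals $c+e$, where $c$ is the number of carries in the binary addition $\rho+\beta$ and $e=\nu_2(\rho+\beta+1)$ is the number of trailing $1$'s of $\rho+\beta$. The $e$ trailing ones force no carry in the lowest $e$ positions, and no carry leaves the top position, so $c+e$ cannot exceed the number $\lambda$ of binary digits of $\rho+\beta$; since $\beta\leq\rho$ one has $\rho\geq 2^{\lambda-2}$, whence $c+e\leq\lambda\leq\rho$ for $\lambda\geq4$, with the finitely many cases $\rho+\beta<8$ checked by hand. The second bound in \e{bin2} again follows from the symmetry $u\mapsto N-u$, $v\mapsto N-v$, $w\mapsto n-w$. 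I expect the $p=2$ case to be the main obstacle: one cannot get away with a single factor and must evaluate $\nu_2\binom{N}{T}$ exactly, the delicacy being that \e{bin2} is attained with equality (for instance when $w$ is the only relevant power of $2$), so there is no slack to absorb a lossy estimate.
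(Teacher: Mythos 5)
Your proposal is correct, and for odd $p$ it is essentially the paper's own argument viewed through the symmetry that both proofs invoke at the end: the paper takes the larger index $u\gqs w$ and extracts the single factor $n-w$ from the numerator of $\binom{n-1}{u}$, proving the bottom bound of \e{binh} first, while you take the smaller index $v\lqs w-1$ and extract the factor $w$ from the numerator of $\binom{n-1}{v}$, proving the top bound first; both arguments control the factorial denominators by Legendre's digit-sum formula and then apply $u\mapsto n-1-u$, $v\mapsto n-1-v$, $w\mapsto n-w$ to get the remaining inequality. The genuine divergence is at $p=2$, the delicate case, exactly as you anticipated. The paper repairs the lossy estimate by two cheap refinements: it shows $s_2(u)+s_2(v)\gqs 2$ (the value $1$ would force $v=0$ and $u=2w-1\gqs 3$ a power of $2$, impossible once $w\gqs2$), and it counts at least $\lfloor u/2\rfloor+\lfloor v/2\rfloor-1=w-2$ further even factors in the two numerators beyond $n-w$; together these give precisely $\nu_2(2(n-w))-w$. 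You instead compute $\nu_2\binom{N}{T}$ exactly by counting multiples of $2^i$ in the run of $T$ consecutive integers around $w$, and reduce \e{bin2} to the carry inequality $c+e\lqs\rho$, settled by a digit-length count plus a finite verification (the cases $\rho+\beta<8$ do all check out, so that step is sound). Your route is longer and more intricate -- the exact evaluation and the Kummer-style carry count are doing real work -- but it is sharp and explains when \e{bin2} holds with equality; the paper's route shows that, contrary to your closing remark, an exact evaluation is not forced: two coarse counting improvements to the single-factor bound already close the gap. Both proofs are valid; yours buys precision at the cost of case analysis, the paper's buys brevity.
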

\begin{proof}
It is easy to verify \e{binh} and \e{bin2} when $w=1$, so we assume $w\gqs 2$. Without losing generality, suppose $u\gqs v$. Then $u\gqs w$ and it follows that $n-w$ divides the numerator $(n-1)(n-2) \cdots (n-u)$ of $\binom{n-1}{u}$, written with denominator $u!$. Hence, the left sides of \e{binh} and \e{bin2} are at least $\nu_p(n-w)-\nu_p(u!)-\nu_p(v!)$. The well-known formula
  $(p-1)\nu_p(m!)=m-s_p(m)$
has $s_p(m)$ equalling the sum of the base $p$ digits of $m$. Therefore
\begin{align*}
  \nu_p(n-w)-\nu_p(u!)-\nu_p(v!) &  = \nu_p(n-w)- \frac{u+v - s_p(u)-s_p(v)}{p-1} \\
   & \gqs \nu_p(n-w)- \frac{(2w-1) - 1}{p-1},
\end{align*}
proving the bottom inequality in \e{binh}.

For $p=2$ we may improve this slightly. Note that $s_2(u)+s_2(v)=1$ only for $v=0$ and $u$ a power of $2$. But if $v=0$ then $u=2w-1 \gqs 3$ is not a power of $2$. Therefore $s_2(u)+s_2(v)\gqs 2$. Also, considering the numerators of $\binom{n-1}{u}$ and $\binom{n-1}{v}$, excluding the $n-w$ factor, they must contain at least $\lfloor u/2\rfloor + \lfloor v/2\rfloor-1=w-2$ twos. Then the left side of  \e{bin2} is at least
$$
\nu_2(n-w) + (w-2)-(2w-1)+2= \nu_2(2(n-w))-w.
$$

Under the maps $u \mapsto n-1-u$ and $v \mapsto n-1-v$ we have $w \mapsto n-w$. This symmetry gives the top inequalities in \e{binh} and \e{bin2}.
\end{proof}

\section{Divisibility of Bernoulli numbers} \label{divb}

The Bernoulli number results we need in the next section are described here.
The following one is due to Clarke in \cite[Prop. 8]{cla}, explicitly determining  the fractional part of $B_n/n$.

\begin{theorem} \label{clar}
For $n$ even and positive,
\begin{equation}\label{bere}
  \frac{B_n}{n}=z_1(n)+\sum_{p-1\mid n} \frac{z_p(n)}{p^{1+\nu_p(n)}},
\end{equation}
where we are summing over  primes $p$. Here $z_1(n)$ and $z_p(n)$ are integers with $1\lqs z_p(n) < p^{1+\nu_p(n)}$. Precisely,
\begin{equation} \label{cas2}
  z_p(n) \equiv \left( 2\delta_{p,2} \delta_{n,2}+\left(\frac n{(p-1)p^{\nu_p(n)}}\right)^{-1} \right)\bmod p^{1+\nu_p(n)}.
\end{equation}
\end{theorem}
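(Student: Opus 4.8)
The plan is to localize at each prime and reduce Clarke's formula to a single refined congruence for $B_n$. First I would invoke the elementary partial--fraction (equivalently Chinese remainder) decomposition of a rational number: any $x\in\Q$ may be written uniquely as $x = z + \sum_p \eta_p$ where $z\in\Z$, the sum runs over primes, each $\eta_p\in[0,1)$ has denominator a power of $p$ with $x-\eta_p\in\Z_p$, and only finitely many $\eta_p$ are nonzero. Applying this to $x=B_n/n$, the whole theorem amounts to identifying $B_n/n \bmod \Z_p$ for each $p$. By the von Staudt--Clausen theorem the denominator of $B_n$ is $\prod_{p-1\mid n} p$, so a prime $p$ can contribute only when $p-1\mid n$; for such $p$ the contribution is $\eta_p = z_p(n)/p^{1+\nu_p(n)}$ with $z_p(n)\gqs 1$, and for $p-1\nmid n$ one must check that $\eta_p=0$.

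The case $p-1\nmid n$ requires $B_n/n\in\Z_p$, i.e. $\nu_p(B_n)\gqs\nu_p(n)$. This is strictly stronger than von Staudt--Clausen (for instance $\nu_5(B_{10})=1=\nu_5(10)$); it is the standard $p$-integrality of $B_n/n$ underlying $p$-adic $L$-functions, which I would quote as a consequence of the Kummer congruences, comparing $n$ to a small exponent in the same nonzero residue class modulo $p-1$.

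For $p-1\mid n$, write $m:=n/p^{\nu_p(n)}$, so that $p\nmid m$, $p-1\mid m$, and $n/\big((p-1)p^{\nu_p(n)}\big)=m/(p-1)$. Since $\nu_p(B_n)=-1$ here, the quantity $p^{1+\nu_p(n)}B_n/n = pB_n/m$ is a $p$-adic unit, and the fractional part is pinned down by $z_p(n)\equiv pB_n\cdot m^{-1}\pmod{p^{1+\nu_p(n)}}$. A short computation then shows that the explicit value claimed for $z_p(n)$ is equivalent to the clean congruences
\[
  pB_n\equiv -1 \pmod{p^{1+\nu_p(n)}}\quad(p\text{ odd}),\qquad 2B_n\equiv 1\pmod{2^{1+\nu_2(n)}}\quad(p=2,\ n>2),
\]
the exceptional term $2\delta_{p,2}\delta_{n,2}$ accounting for $2B_2=1/3\equiv 3\pmod 4$. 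Indeed for odd $p$ one has $\big(m/(p-1)\big)^{-1}\equiv -m^{-1}$, so multiplying through by $m$ converts the claim into $pB_n\equiv -1$; for $p=2$ the same manipulation gives $2B_n\equiv 1+2m\delta_{n,2}$.

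The heart of the proof is therefore this strengthening of von Staudt--Clausen from modulus $p$ to modulus $p^{1+\nu_p(n)}$. I would obtain it from power sums: Faulhaber's formula gives $\sum_{a=0}^{p^N-1}a^n\equiv B_n p^N$ to high $p$-adic accuracy for large $N$, and removing the multiples of $p$ yields $\sum_{p\nmid a}a^n\equiv B_n p^N(1-p^{n-1})$. Evaluating the left side through the expansion $a^n=(1+pt_a)^{p^{\nu_p(n)}m'}$, where $a^{p-1}=1+pt_a$ and $m'=m/(p-1)$, the binomial coefficient $\binom{p^{\nu_p(n)}m'}{1}$ supplies exactly the extra factor $p^{\nu_p(n)}$ invisible to the classical argument. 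The main obstacle is the $p$-adic precision bookkeeping: extracting $B_n$ to accuracy $p^{1+\nu_p(n)}$ forces control of the coprime power sum modulo $p^{N+1+\nu_p(n)}$, well beyond the naive modulus $p^N$, so the higher binomial terms and the dependence of $a^n$ on $a$ beyond $a\bmod p^N$ must be tracked carefully. The prime $p=2$ needs separate handling (there is no primitive root modulo $2^N$ for $N\gqs 3$, so one splits off the $\pm1$ factor), and $n=2$ is genuinely exceptional, which is precisely the origin of the correction $2\delta_{p,2}\delta_{n,2}$.
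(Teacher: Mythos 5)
The paper itself gives no proof of this theorem: it is quoted from Clarke \cite[Prop.~8]{cla}, with the remark that the formulation here slightly simplifies Clarke's and adds the case $n=2$. So your proposal attempts more than the paper does, and must be judged on its own terms. Your overall strategy (localize at each prime, handle $p-1\nmid n$ by $p$-integrality of $B_n/n$, and reduce the case $p-1\mid n$ to a refined von Staudt--Clausen congruence proved by power sums) is a reasonable program, but the one concrete intermediate statement you reduce the theorem to is false. Matching the $p$-part of $B_n/n$ against the claimed value of $z_p(n)$ requires, with $n'=n/\bigl((p-1)p^{\nu_p(n)}\bigr)$,
\begin{equation*}
  \frac{pB_n}{(p-1)\,n'} \equiv (n')^{-1} \bmod p^{1+\nu_p(n)}, \qquad\text{i.e.}\qquad pB_n \equiv p-1 \bmod p^{1+\nu_p(n)},
\end{equation*}
whereas you assert $pB_n\equiv -1 \bmod p^{1+\nu_p(n)}$ for odd $p$. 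The step responsible is your claim $\bigl(m/(p-1)\bigr)^{-1}\equiv -m^{-1}$, which uses $p-1\equiv -1$; that holds modulo $p$ but not modulo $p^{1+\nu_p(n)}$ once $\nu_p(n)\gqs 1$. Concretely, take $p=3$, $n=6$: then $3B_6=1/14\equiv 2 \bmod 9$, which is $p-1$, not $-1\equiv 8$. The same failure occurs for every odd $p$ with $p-1\mid n$ and $p\mid n$ (e.g. $n=12,18$ with $p=3$, or $n=20$ with $p=5$). Your $p=2$ congruence $2B_n\equiv 1$ is the correct ``$p-1$'' form, which is presumably why the discrepancy went unnoticed; had you then proved the odd-$p$ congruence you state, it would contradict your own power-sum computation.

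Beyond this, the heart of the argument --- extracting the refined congruence from $\sum_{p\nmid a,\,a<p^N} a^n \equiv B_n p^N(1-p^{n-1})$ with $p$-adic precision $p^{N+1+\nu_p(n)}$ --- is only sketched: you flag the precision bookkeeping as the main obstacle without resolving it, the binomial-expansion step is not carried out, and the $p=2$ and $n=2$ anomalies (the source of the $2\delta_{p,2}\delta_{n,2}$ term) are asserted rather than derived. So as it stands the proposal is a plausible outline with a wrong target congruence at its center, not a proof; repairing it requires replacing $-1$ by $p-1$ throughout the odd-$p$ case and then actually executing the power-sum estimate (or simply citing Clarke, as the paper does).
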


Our formulation in \e{bere} and \e{cas2} slightly simplifies \cite[Prop. 8]{cla} and includes the case $n=2$.  Since $\gcd(p,z_p(n))=1$, Von Staudt's second theorem is a corollary: for even $n\gqs 2$, the denominator of $B_n/n$ is
\begin{equation} \label{vs2}
  \jb_{n/2} = \prod_{p-1\mid n} p^{1+\nu_p(n)}.
\end{equation}

The next  result gives detailed information about differences of Bernoulli numbers and is contained in  \cite[Prop. 11.4.4]{Co}.
First set
\begin{equation*}
  H_p(n,m):= \begin{cases} \Bigl(1-\frac 1p\Bigr)\Bigl(\frac 1n-\frac 1m\Bigr)-(n-m)\delta_{p,3} & \text{if \ $p-1\mid n$},\\
 0 & \text{if \ $p-1\nmid n$}.
  \end{cases}
\end{equation*}

\begin{theorem} \label{kummer}
Let $n$ and $m$ be even and positive. If $p$ is  prime  and $n \equiv m \bmod (p-1)p^N$  then
\begin{equation}
  \left(1-p^{n-1} \right) \frac{B_n}{n} - \left(1-p^{m-1} \right) \frac{B_{m}}{m}  = H_p(n,m)+\frac{\alpha}{\beta} p^{N+1}, \label{kuu}
\end{equation}
for some rational $\alpha/\beta$ with $p \nmid \beta$.
\end{theorem}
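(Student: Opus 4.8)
The plan is to realise the combination $(1-p^{k-1})B_k/k$ as a value of the Kubota--Leopoldt $p$-adic $L$-function and then to read off \e{kuu} from the analytic behaviour of that function. Write $f(k):=(1-p^{k-1})\frac{B_k}{k}$ and let $\omega$ denote the Teichm\"uller character. The interpolation property of $L_p$ gives $f(k)=-L_p(1-k,\omega^k)$ for even $k\gqs 2$. Since $(p-1)\mid(n-m)$ we have $\omega^n=\omega^m=:\chi$, so that $f(n)$ and $f(m)$ are two values of the \emph{same} function $s\mapsto L_p(1-s,\chi)$. I would then use the standard packaging of $L_p(s,\chi)$ as $G_\chi((1+p)^s-1)$ with $G_\chi(T)\in\Z_p[[T]]$ when $\chi$ is nontrivial, and with a single simple pole at $s=1$ of residue $1-1/p$ when $\chi=\mathbf 1$ (the equivalent formulation as an integral against a $p$-integral measure on $\Z_p^\times$ would do just as well).

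First, suppose $p-1\nmid n$, so $\chi\neq\mathbf 1$ and $G_\chi\in\Z_p[[T]]$; note $p$ is then odd. The two arguments differ by $(1+p)^{1-n}-(1+p)^{1-m}=(1+p)^{1-m}\big((1+p)^{m-n}-1\big)$, and lifting the exponent gives $\nu_p\big((1+p)^{m-n}-1\big)=1+\nu_p(m-n)\gqs N+1$, since $(p-1)p^N\mid(n-m)$ forces $p^N\mid m-n$. As $G_\chi$ has $p$-integral coefficients, $f(n)-f(m)\in p^{N+1}\Z_{(p)}$, which is exactly \e{kuu} because $H_p(n,m)=0$ in this case.

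Next suppose $p-1\mid n$ (this covers every instance of $p=2$), so $\chi=\mathbf 1$ and $\zeta_p(s):=L_p(s,\mathbf 1)$ has a simple pole at $s=1$. Writing $\zeta_p(s)=\frac{1-1/p}{s-1}+g(s)$ with $g$ analytic, the pole contributes precisely $\frac{1-1/p}{-k}$ to $\zeta_p(1-k)=-f(k)$, so the difference of principal parts yields the term $(1-1/p)\big(\frac1n-\frac1m\big)$ of $H_p(n,m)$. The analytic remainder $g(1-n)-g(1-m)$ is then to be estimated by the same device as in the nontrivial case.

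The delicate point, and the one I expect to be the main obstacle, is this estimate of the analytic remainder for small primes, which is the origin of the correction $-(n-m)\delta_{p,3}$. For $p\gqs 5$ the coefficient valuations of $g$ (equivalently of $G_\chi$) are generous enough that $g(1-n)\equiv g(1-m)\bmod p^{N+1}$ and no correction survives. For $p=3$, however, the first-order Taylor coefficient of $g$ is $3$-adically borderline: it is a unit, so the linear term contributes a multiple of $(n-m)$ of size exactly $|n-m|_p$ that is \emph{not} absorbed into $p^{N+1}\Z_{(p)}$, leaving precisely the correction $-(n-m)\delta_{p,3}$. Carefully tracking these coefficient valuations, together with the analogous but harmless boundary behaviour at $p=2$ (where the required modulus is already built into the hypothesis), is where the genuine bookkeeping lies; it is exactly this analysis that is carried out in \cite[Prop.~11.4.4]{Co}.
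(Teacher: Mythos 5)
The paper does not prove this theorem itself: it quotes it from Cohen's Proposition 11.4.4, which is established via the Kubota--Leopoldt $p$-adic zeta function, exactly the route you outline. Since your sketch (the interpolation identity $f(k)=-L_p(1-k,\omega^k)$, the Iwasawa power series with a lifting-the-exponent estimate for the case $p-1\nmid n$, the pole at $s=1$ producing the main term of $H_p$, and the borderline linear Taylor coefficient at $p=3$ producing the $-(n-m)\delta_{p,3}$ correction) is precisely the mechanism of that cited proof, and you defer the remaining coefficient bookkeeping to the same reference, your proposal is correct and takes essentially the same approach as the paper.
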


The cases of this theorem when $p-1\nmid n$ are known as the classical Kummer congruences. They were extended by Cohen to $p-1\mid n$  using the theory of the Kubota-Leopoldt $p$-adic
zeta function in  Chapter 11 of \cite{Co}.

\section{Proof of Conjecture \ref{gn-con}} \label{pf}

\begin{theorem} \label{main}
Let $k$ and $\ell$ be positive and even with $k\lqs \ell$ and $m=k+\ell \gqs 6$.
Then for all nonnegative integers $u$, $v$ with $u+v=k-1$,
\begin{equation} \label{b3}
  \jb_{m/2}\binom{m/2-1}{u} \binom{m/2-1}{v}\left(\frac{B_m}{m}\left(\frac{B_\ell}{\ell} +\frac{B_k}{k}\right) - \frac{B_\ell}{\ell}\frac{B_k}{k}\right) +\frac{\delta_{k,2}}2 \in \Z.
\end{equation}
\end{theorem}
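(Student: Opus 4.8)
The plan is to fix a prime $p$ and show that the left-hand side of \e{b3}, a rational number, has non-negative $p$-adic valuation $\nu_p$; non-negativity at every $p$ then gives integrality. Throughout write $a=B_m/m$, $b=B_\ell/\ell$, $c=B_k/k$, so that the bracketed quantity is $a(b+c)-bc$, and record the elementary identity
\begin{equation*}
  a(b+c)-bc = a^2-(a-b)(a-c),
\end{equation*}
which will expose the cancellation we need. By Von Staudt's theorem \e{vs2} the factor $\jb_{m/2}$ is exactly the denominator of $a$, with $\nu_p(\jb_{m/2})=1+\nu_p(m)$ when $p-1\mid m$ and $0$ otherwise; the same description, with $m$ replaced by $\ell$ or $k$, governs $\nu_p(b)$ and $\nu_p(c)$. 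The correction $\delta_{k,2}/2$ matters only for $p=2$.

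Since $m=k+\ell$, the residues of $k,\ell,m$ modulo $p-1$ can realize only the patterns: none divisible by $p-1$; exactly $m$; exactly $k$; exactly $\ell$; or all three (the combinations in which $p-1$ divides $m$ and exactly one of $k,\ell$ are impossible). If none is divisible, then $a,b,c$ are all $p$-integral and there is nothing to prove. If only $m$ is divisible, then $b,c$ are $p$-integral and $\jb_{m/2}a$ is an integer, so both $\jb_{m/2}a(b+c)$ and $\jb_{m/2}bc$ are $p$-integral (the latter even divisible by $p$); this case is immediate.

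Suppose next that $p-1$ divides exactly one of $k,\ell$, say $k$ (the case of $\ell$ is symmetric under $k\leftrightarrow\ell$, $u\leftrightarrow v$). Then $p-1\nmid m$, so $\jb_{m/2}$ is a $p$-unit and $a,b$ are $p$-integral while $\nu_p(c)=-(1+\nu_p(k))$. Writing $a(b+c)-bc=ab+(a-b)c$, it suffices to bound $\nu_p(a-b)$. Here $m\equiv\ell\pmod{p-1}$ and $m-\ell=k$, so the classical Kummer congruence, Theorem \ref{kummer} with $H_p=0$ and $N=\nu_p(k)$, gives $\nu_p(a-b)\gqs \nu_p(k)+1$, whence $(a-b)c$ is $p$-integral. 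When $\ell$ is so small that the term $p^{\ell-1}b$ in Theorem \ref{kummer} is not negligible the bound weakens to $\nu_p(a-b)\gqs \ell-1$, leaving a deficit of at most $\nu_p(k)+2-\ell$; this is absorbed by the binomial factor, since Lemma \ref{bino} gives $\nu_p(\binom{m/2-1}{u}\binom{m/2-1}{v})\gqs \nu_p(k)+\frac{2-\ell}{p-1}\gqs \nu_p(k)+2-\ell$ (using $\ell\gqs 2$).

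The substantive case is when $p-1$ divides all of $k,\ell,m$ (this always occurs for $p=2,3$, and is where the binomial coefficients are genuinely needed). Now $a,b,c$ are all non-$p$-integral and, crucially, $\nu_p(a^2)=\nu_p((a-b)(a-c))=-2(1+\nu_p(m))$, so the displayed identity can help only through cancellation of leading terms. The plan is to apply Theorem \ref{kummer} in its full form (Cohen's extension, $p-1\mid m$) to both pairs $(m,\ell)$ and $(m,k)$, with $H_p(m,\ell)=(1-\tfrac1p)(\tfrac1m-\tfrac1\ell)-(m-\ell)\delta_{p,3}$, to express $a-b$ and $a-c$ modulo high powers of $p$; multiplying these shows that the leading $p$-adic term of $(a-b)(a-c)$ agrees with that of $a^2$, the agreement being exactly Clarke's determination of the residue $z_p(m)\equiv\big(m/((p-1)p^{\nu_p(m)})\big)^{-1}$ in Theorem \ref{clar}. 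After this cancellation a controlled residual valuation survives, and the remaining deficit is covered by the binomial bounds \e{binh} (for odd $p$) and \e{bin2} (for $p=2$), together with the factor $\nu_p(\jb_{m/2})=1+\nu_p(m)$. I expect the hardest part to be precisely this bookkeeping in the all-divisible case: tracking the cancellation to sufficient depth when $k$ or $\ell$ is small, so that the $1-p^{k-1}$ and $1-p^{\ell-1}$ factors in Theorem \ref{kummer} intervene; handling the special $\delta_{p,3}$ term; and, for $p=2$, matching the surviving half-integer against the correction $\delta_{k,2}/2$ via the exact residue of Theorem \ref{clar}.
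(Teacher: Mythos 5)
Your framework---fix a prime $p$, show the left side of \e{b3} has nonnegative $p$-adic valuation, and split according to which of $k,\ell,m$ are divisible by $p-1$---is exactly the paper's, and your handling of the primes where $p-1$ does \emph{not} divide all three is correct and matches the paper: the case where only $m$ is divisible is immediate, and when $p-1$ divides only $k$ (or only $\ell$), your rearrangement $ab+(a-b)c$, the classical Kummer congruence to modulus $(p-1)p^{\nu_p(k)}$, and the absorption of the deficit by the bound $\nu_p\bigl(\binom{m/2-1}{u}\binom{m/2-1}{v}\bigr)\gqs \nu_p(k)+\frac{2-\ell}{p-1}$ from Lemma \ref{bino} reproduce the paper's argument for those primes.

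The gap is the all-divisible case, which is where the whole content of the theorem lies (it always contains $p=2$ and $p=3$), and which your proposal leaves as a plan rather than a proof. Two concrete problems. First, your valuation claim $\nu_p(a^2)=\nu_p((a-b)(a-c))=-2(1+\nu_p(m))$ is false whenever $\nu_p(\ell)>\nu_p(m)$ (which forces $\nu_p(k)=\nu_p(m)=:x$): then $\nu_p(a-b)=-(1+\nu_p(\ell))<-(1+\nu_p(m))$, and integrality in this subcase is not a matter of leading terms of $(a-b)(a-c)$ matching $a^2$---the needed cancellation sits inside the \emph{other} factor $a-c$. The paper treats this subcase by writing the bracket as $b(a-c)+ac$, applying Theorem \ref{kummer} to the pair $(m,k)$ to modulus $(p-1)p^{\nu_p(\ell)}$ (where now $H_p(m,k)\neq 0$, including the $\delta_{p,3}$ term), and then verifying $m'k'a_pc_p-\ell'b_p\equiv 0 \bmod p^{x+1}$, where $a_p,b_p,c_p$ are the Clarke numerators of Theorem \ref{clar} and $m'=m/((p-1)p^{\nu_p(m)})$, etc. Second, even in the subcase $\nu_p(k)=\nu_p(\ell)=x\lqs N=\nu_p(m)$ where your valuation count is right, ``agreement of leading terms'' is far from sufficient: since $\jb_{m/2}$ contributes $p^{N+1}$, you must exhibit cancellation to depth $N+1$ below the leading order, i.e.\ prove the congruence $a_pb_p+a_pc_p-b_pc_p\,p^{N-x}\equiv 0 \bmod p^{x+1}$. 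The paper obtains this not from Kummer at all but from Clarke's residues $m'a_p\equiv \ell'b_p\equiv k'c_p\equiv 1 \bmod p^{x+1}$ in \e{cas2} combined with the identity $k'+\ell'=m'p^{N-x}$, which is where the hypothesis $m=k+\ell$ finally enters at the level of residues; your proposal never isolates this step. Note also that your identity $a(b+c)-bc=a^2-(a-b)(a-c)$ buys nothing: it is an algebraic tautology, and once the singular parts from Theorem \ref{clar} are substituted, bounding $a^2-(a-b)(a-c)$ is verbatim the same congruence as bounding $a(b+c)-bc$. Together with the deferred $p=2$, $k=2$ analysis (where $\delta_{k,2}/2$ must be matched against the $2\delta_{p,2}\delta_{n,2}$ correction in \e{cas2} and the oddness of $\ell/2$), the portion you describe as ``bookkeeping'' and leave undone is in fact the proof.
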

\begin{proof}
Let $P$ be the set of primes $p$ such that $p-1$ divides all three of $k$, $\ell$ and $m$. Then $\{2,3\} \subseteq P$. Let $K$, $L$ and $M$ be the primes with $p-1$ dividing only $k$, $\ell$ and $m$, respectively. (If $p-1$ divides any two of $k$, $\ell$ and $m$, then it must divide the third.) With Theorem \ref{clar}, write
\begin{align*}
   \frac{B_m}{m} & =a_1+\sum_{p\in P} \frac{a_p}{p^{1+\nu_p(m)}} + \sum_{p\in M} \frac{a_p}{p^{1+\nu_p(m)}},\\
  \frac{B_\ell}{\ell} & =b_1+\sum_{p\in P} \frac{b_p}{p^{1+\nu_p(\ell)}} + \sum_{p\in L} \frac{b_p}{p^{1+\nu_p(\ell)}},\\
  \frac{B_k}{k} & =c_1+\sum_{p\in P} \frac{c_p}{p^{1+\nu_p(k)}} + \sum_{p\in K} \frac{c_p}{p^{1+\nu_p(k)}}.
\end{align*}
Our goal is to show that  the left side of \e{b3} is $p$-integral, (has $p$-adic valuation at least $0$), for all  $p$ in $P$, $M$, $L$ and $K$. Note that, by \e{vs2},
\begin{equation*}
   \jb_{m/2} = \prod_{p \in P\, \cup \,M} p^{1+\nu_p(m)}.
\end{equation*}

{\bf Primes in $\bm{M}$.} The case $p\in M$ is easily dealt with: the $p^{1+\nu_p(m)}$ factor from $\jb_{m/2}$ cancels the same factor only appearing in the denominator of $B_m/m$. So   the left side of \e{b3} is $p$-integral.

{\bf Primes in $\bm{L}$, $\bm{K}$.} Next take $p \in L$.
Express the Bernoulli number part of \e{b3} as
\begin{equation} \label{b3b}
  \frac{B_\ell}{\ell}\left(\frac{B_m}{m} -\frac{B_k}{k}\right) + \frac{B_m}{m}\frac{B_k}{k}.
\end{equation}
 Then $p-1$ does not divide $m$ and $k$, while $p-1$ does divide $m-k=\ell$ with
\begin{equation*}
  m \equiv k \bmod (p-1)p^{\nu_p(\ell)}.
\end{equation*}
By Theorem \ref{kummer},
\begin{equation} \label{eb}
  \left(1-p^{m-1} \right) \frac{B_m}{m} - \left(1-p^{k-1} \right) \frac{B_k}{k} = H_p(m,k)+\frac \alpha\beta p^{1+\nu_p(\ell)},
\end{equation}
for rational $\alpha/\beta$ with $p \nmid \beta$ and $H_p(m,k)=0$. Hence
\begin{equation} \label{io}
 \frac{1}{p^{1+\nu_p(\ell)}}\left(\frac{B_m}{m} -\frac{B_k}{k}\right) = \frac \alpha\beta
 + p^{m-2-\nu_p(\ell)} \frac{B_m}{m} -p^{k-2-\nu_p(\ell)}\frac{B_k}{k}.
\end{equation}
If $k-2-\nu_p(\ell)<0$ then multiplication by the binomial coefficients ensures that the left side of \e{b3} is $p$-integral, since
\begin{equation} \label{vpb}
   \nu_p\left( \binom{m/2-1}{u}  \binom{m/2-1}{v}\right) \gqs \nu_p(\ell)+\frac{2-k}{p-1} \gqs \nu_p(\ell)+2-k,
\end{equation}
by \e{binh} in Lemma \ref{bino}. The case $p\in K$ is handled in the same way, using  the rearrangement
\begin{equation} \label{b4b}
  \frac{B_k}{k}\left(\frac{B_m}{m} -\frac{B_\ell}{\ell}\right) + \frac{B_m}{m}\frac{B_\ell}{\ell}.
\end{equation}

{\bf Primes in $\bm{P-\{2\}}$ with $\bm{\nu_p(m)}$ small.} That leaves the primes in $P$ to check. First take $p$ in the case where $N:=\nu_p(\ell)$ is strictly larger than $\nu_p(m)$ and $\nu_p(k)$. Then necessarily $\nu_p(m)=\nu_p(k)$ and let $x$ be this common valuation.
Assume that $p \neq 2$.
The only terms of \e{b3b} that can possibly  be non $p$-integral after multiplication by $\jb_{m/2}$ are
\begin{equation*}
  \frac{b_p}{p^{N+1}}\left(\frac{B_m}{m} -\frac{B_k}{k}\right) + \frac{a_p}{p^{x+1}}\frac{c_p}{p^{x+1}}.
\end{equation*}
Here $\jb_{m/2}$ contains a $p^{x+1}$ factor, so with Theorem \ref{kummer} and \e{eb} again, we wish to show a nonnegative valuation for
\begin{equation} \label{sun}
  p^{x+1}\left(\frac{b_p}{p^{N+1}}H_p(m,k) + b_p\frac \alpha\beta
 + b_p p^{m-2-N} \frac{B_m}{m} -b_p p^{k-2-N}\frac{B_k}{k}  + \frac{a_p c_p}{p^{2x+2}} \right)
\end{equation}
multiplied by the binomial coefficients. The terms containing $B_m/m$ and $B_k/k$ have total valuation at least $k-2-N$. If $k-2-N<0$ then  the binomial coefficients, with valuation at least
\begin{equation*}
  \nu_p(\ell)+\frac{2-k}{p-1} \gqs N+1-k/2
\end{equation*}
by \e{binh}, ensure  $p$-integrality.
Write the integers
\begin{equation*}
   m'=\frac{m}{(p-1) p^{x}}, \qquad k'=\frac{k}{(p-1) p^{x}}, \qquad \ell'=\frac{\ell}{(p-1) p^{N}},
\end{equation*}
so that
\begin{equation*}
  H_p(m,k)=-\frac{\ell'}{m'k'}p^{N-1-2x}- \ell'(p-1)p^N \delta_{p,3}.
\end{equation*}
Then \e{sun} without the $\alpha/\beta$, $B_m/m$ and $B_k/k$  terms equals
\begin{equation*}
  -b_p \ell'(p-1)p^x \delta_{p,3} + \frac{a_p c_p}{p^{x+1}} -\frac{ b_p  \ell'}{m'k' p^{x+1}}
\end{equation*}
and it remains to  show $\nu_p\left(m'k' a_p c_p - b_p  \ell'\right) \gqs x+1$.
But by \e{cas2} we know $m' a_p \equiv k' c_p \equiv \ell' b_p \equiv 1 \bmod p^{x+1}$, and hence
\begin{equation*}
   m' a_p \cdot k' c_p - \ell' b_p
 \equiv 1 \cdot 1 -  1  \equiv 0 \bmod p^{x+1},
\end{equation*}
as we wanted.

{\bf The case $\bm{2\in P}$ with $\bm{\nu_2(m)}$ small.}
We continue the above analysis for $p=2$, and first suppose that $k\neq 2$. The
arguments go through as before, with the only difference being that the binomial coefficients now just have valuation at least $N-k/2$ by \e{bin2}. This ensures  $p$-integrality of the terms in \e{sun} containing $B_m/m$ and $B_k/k$  for $k\gqs 4$.

If $k=2$ then $x=1$ and we must deal with the $B_k/k$ term in \e{sun} as well as the extra terms  $\delta_{k,2}/2$ from \e{b3} and $a_2/2$ from the $2\delta_{p,2} \delta_{k,2}$ part of $c_p$ in \e{cas2}. Altogether, including the binomial coefficients factor, this is
\begin{align*}
  \left(\frac m2-1\right)\left(   \frac{a_2}2 - \frac{b_2}{3\cdot 2^{N}}\right) +\frac 12 & =
  \ell' 2^{N-1}\frac{a_2}2 -\ell' 2^{N-1} \frac{b_2}{3 \cdot 2^{N}}  +\frac 12 \\
   & = \ell' 2^{N-2}a_2 + \frac{3-\ell' b_2}6.
\end{align*}
This is $2$-integral since $N>x=1$ and $\ell' b_2 \equiv 1 \bmod 4$.

It follows that   the left side of \e{b3} is $p$-integral for all $p \in P$ where $\nu_p(\ell)$ is strictly greater than $\nu_p(m)$ and $\nu_p(k)$. Clearly the same argument works if $\nu_p(k)$ is strictly greater than $\nu_p(m)$ and $\nu_p(\ell)$.

{\bf Primes in $\bm{P}$ with $\bm{\nu_p(m)}$ large.} The final primes to check are $p \in P$ with  $\nu_p(\ell)=\nu_p(k)=x$ and $\nu_p(m)=N \gqs x$. Assume that $p$ and $k$ are not both $2$. Now $\jb_{m/2}$ contains the factor $p^{N+1}$, and the only terms of \e{b3} that can be non $p$-integral  are
\begin{equation} \label{bd}
   p^{N+1}\left(\frac{a_p}{p^{N+1}}\frac{ b_p}{p^{x+1}}
  + \frac{a_p}{p^{N+1}}\frac{c_p}{p^{x+1}}- \frac{ b_p}{p^{x+1}}\frac{c_p}{p^{x+1}}\right),
\end{equation}
omitting the binomial coefficient factors. This time we need the integers
\begin{equation*}
  k'=\frac{k}{(p-1)p^x}, \qquad \ell'=\frac{\ell}{(p-1)p^x}, \qquad m'=\frac{m}{(p-1)p^{N}}.
\end{equation*}
Then $m' p^{N-x} = k'+\ell'$ and $m'a_p \equiv \ell'b_p \equiv k' c_p \equiv 1 \bmod p^{x+1}$.
 We claim
that $p^{x+1}$ divides the numerator of
\begin{equation} \label{jj}
  \frac{a_p b_p  +a_p c_p- b_p c_p p^{N-x}}{p^{x+1}}
\end{equation}
from \e{bd}. Evidently,
\begin{align*}
  m'\ell'k'\left( a_p b_p  +a_p c_p- b_p c_p p^{N-x}\right) & =  m' a_p  \cdot \ell' b_p \cdot k' +m'a_p  \cdot k' c_p \cdot \ell'- \ell' b_p  \cdot k' c_p  \cdot m' p^{N-x}\\
   & \equiv 1  \cdot 1 \cdot k' +1  \cdot 1 \cdot \ell'- 1  \cdot 1  \cdot (k'+\ell') \equiv 0 \bmod p^{x+1}.
\end{align*}
This implies our claim, since $m'\ell'k'$ is relatively prime to $p$, and so \e{bd} is $p$-integral, as desired.

{\bf The case $\bm{2\in P}$, $\bm{k=2}$ with $\bm{\nu_2(m)}$ large.} The last case  of $p=2$ and $k= 2$    has some further terms to examine. Here $x=1$ and $\ell>2$. The extra possibly non $2$-integral piece of \e{b3}, corresponding to the $2\delta_{p,2} \delta_{k,2}$ part of $c_p$ in \e{cas2}, and  the $\delta_{k,2}/2$ term, is
\begin{equation} \label{apr}
  \left(\frac m2-1\right)\frac{   a_2 -  b_2  \cdot 2^{N-x}}{2} +\frac 12,
\end{equation}
for $x=1$. Since $m/2-1=\ell/2$ is odd, then \e{apr} is an integer if we can show that $a_2 -  b_2  \cdot 2^{N-x}$ is odd. To verify this, use the congruences before \e{jj} to show
\begin{align*}
  m'\ell'(a_2 -  b_2  \cdot 2^{N-x}) & = m' a_2 \cdot \ell' - \ell'b_2 \cdot m'2^{N-x}  \\
  & \equiv 1 \cdot \ell' - 1 \cdot (k'+\ell') \bmod 4 \\
&  \equiv - k' \bmod 4,
\end{align*}
for the odd numbers $m'$, $\ell'$ and $k'$.
This completes the proof of Theorem \ref{main}.
\end{proof}

\begin{cor} \label{cos}
Conjectures \ref{con} and \ref{gn-con} are true.
\end{cor}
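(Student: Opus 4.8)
The plan is to assemble the results already in hand, the crux being the recognition that Theorem \ref{main} is nothing other than condition \e{nec} of Proposition \ref{all} after a reindexing. Concretely, I would fix a positive integer $n\gqs 3$ and, given nonnegative $u,v,w$ with $1\lqs w\lqs n/2$ and $u+v=2w-1$, set $k=2w$ and $\ell=2n-2w$, so that $m:=k+\ell=2n$ and $m/2=n$. One then checks that the hypotheses of Theorem \ref{main} are exactly met: $k$ and $\ell$ are positive and even, the inequality $k\lqs \ell$ is equivalent to $w\lqs n/2$, and $m=2n\gqs 6$ is equivalent to $n\gqs 3$. Under this substitution $\jb_{m/2}=\jb_n$, the product $\binom{m/2-1}{u}\binom{m/2-1}{v}$ becomes $\binom{n-1}{u}\binom{n-1}{v}$, the Bernoulli combination in \e{b3} becomes the one in \e{nec} (since addition and multiplication commute), and $\delta_{k,2}=\delta_{w,1}$. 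Hence \e{b3} coincides with \e{nec}, and Theorem \ref{main} delivers \e{nec} for every admissible triple $u,v,w$.

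Next I would invoke Proposition \ref{all}: since \e{nec} holds for all $n\gqs 3$, we conclude $G_n(x)\in\Z[x_1,x_2]$ for all $n\gqs 3$. The remaining case $n=2$ of Conjecture \ref{gn-con} is immediate, because $G_2=0$ (indeed the first $n$ with $G_n$ not identically zero is $n=6$), and the zero polynomial trivially has integer coefficients. Combining these observations, Conjecture \ref{gn-con} holds for every $n\gqs 2$.

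Finally, Proposition \ref{ik} asserts that $G_n(x)\in\Z[x_1,x_2]$ forces $\Psi_n$ to be integer valued for each $n\gqs 2$; applying it yields Conjecture \ref{con}. Thus both conjectures follow at once. The only point demanding care here is the bookkeeping of the reindexing, namely verifying that the range $1\lqs w\lqs n/2$ in \e{nec} corresponds bijectively to the constraints ``$k,\ell$ positive even, $k\lqs \ell$'' in Theorem \ref{main}, and that the boundary value $n=2$, which lies outside the scope of Proposition \ref{all}, is disposed of by the vanishing of $G_2$. All the genuine difficulty has already been absorbed into Theorem \ref{main}, so no new obstacle arises at this stage.
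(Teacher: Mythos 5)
Your proposal is correct and follows essentially the same route as the paper: Theorem \ref{main} with the substitution $k=2w$, $\ell=2n-2w$, $m=2n$ verifies condition \e{nec}, so Proposition \ref{all} gives Conjecture \ref{gn-con}, and Proposition \ref{ik} then yields Conjecture \ref{con}. Your explicit check of the reindexing and of the boundary case $n=2$ (handled by $G_2=0$, which lies outside the scope of Proposition \ref{all}) is a careful touch the paper leaves implicit, but it is not a different argument.
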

\begin{proof}
Theorem \ref{main} and Proposition \ref{all} imply Conjecture \ref{gn-con}. This in turn implies Conjecture \ref{con} via Proposition \ref{ik}.
\end{proof}

\section{Further formulas for $\Psi_n$} \label{mor}

For $A=\left(\begin{smallmatrix}a&b\\c&d\end{smallmatrix}\right)$ in  $\G$, set $j_A(z):=cz+d$ and  define the  quadratic form
\begin{equation*}
  q_A(z):=(Az-z)j_A(z) = c z^2+(d-a)z-b.
\end{equation*}
Then
\begin{equation*}
  q_A( B z)  \cdot j_B(z)^2 =q_{B^{-1} AB}(z)
\end{equation*}
and hence
\begin{equation} \label{qry}
  q_A(z+k)  =q_{T^{-k} A T^k}(z), \qquad q_A(-1/z) \cdot z^2  =q_{S^{-1} A S}(z).
\end{equation}
Next, for fixed $n$, set
\begin{equation} \label{fraa}
  \qq_r(A):=[z^r] q_A(z)^{n-1},
\end{equation}
so that, for example,
\begin{align}\label{eg}
  \qq_0(A)=(-b)^{n-1}, & \qquad \qq_1(A)=(n-1)(-b)^{n-2}(d-a), \\
  \qq_{0^*}(A)=c^{n-1}, & \qquad \qq_{1^*}(A)=(n-1)c^{n-2}(d-a).
\end{align}
Following from \e{qry},
\begin{equation} \label{qry2}
  \qq_r(T^{-k} A T^k) =\sum_{m=0}^{2n-2} \binom{m}{r} k^{m-r} \qq_m(A), \qquad \qq_r(S^{-1} A S) = (-1)^r \qq_{r^*}(A).
\end{equation}
Set
\begin{equation*}
  \Psi_n^{(0)}(A) := \u(A)^{1-n}\sum_{j=1}^p \left(\qq_{0^*}(\tilde{A}_j) - \qq_{0}(\tilde{A}_j) \right) =
  \sum_{j=1}^{p} \left( (\tilde{c}_j)^{n-1} - (-\tilde{b}_j)^{n-1}\right),
\end{equation*}
as in the first part of \e{dkm} and using its notation. The next result is stated in \cite[Eq. (2.21)]{duke} and we give a direct proof.

\begin{theorem} \label{zero}
We have
\begin{equation}\label{hv}
  \Psi_n^{(0)}(A) = \u(A)^{1-n}\sum_{j=1}^{q} \sum_{r=0}^{2n-2}\left(  \frac{B_{r+1}}{r+1}+\frac{B_{r^*+1}}{r^*+1}
  -\frac{k_j^{r+1}}{r+1}
  + \delta(r)
  \right) \qq_r(A_j),
\end{equation}
for $\delta(r)= (\delta_{r,0}+\delta_{r^*,0})/2 - (\delta_{r,1}+\delta_{r^*,1})/(2n-2)$.
\end{theorem}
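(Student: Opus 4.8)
The plan is to start from the definition of $\Psi_n^{(0)}(A)$ as the single-letter cyclic sum $\u(A)^{1-n}\sum_{j=1}^{p}\big(\qq_{0^*}(\tilde A_j)-\qq_{0}(\tilde A_j)\big)$ attached to the word \e{tv}, and to transport it onto the block decomposition \e{tv2}, where the $A_j$ and $k_j$ appearing in \e{hv} live. The engine throughout is the conjugation rule \e{qry2}. Writing $\Sigma_c=\sum_j\qq_{0^*}(\tilde A_j)$ and $\Sigma_b=\sum_j\qq_{0}(\tilde A_j)$, the first observation is that $\qq_{0^*}$ is invariant under $T$-conjugation and $\qq_0$ under $V$-conjugation (the former is the $r=0^*$ case of \e{qry2}, the latter follows since $b$ is fixed by $V$-conjugation). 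Thus within each maximal $T$-run of \e{tv} the value $\qq_{0^*}$ is constant and within each $V$-run $\qq_0$ is constant, so the genuine variation in $\Sigma_c$ and $\Sigma_b$ occurs only in the complementary runs.

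Next I would evaluate that variation by Faulhaber summation. By \e{qry2}, $\qq_0(T^{-h}BT^{h})=\sum_m h^m\qq_m(B)$, so summing over a $T$-run of length $\nu$ and using $\sum_{h=0}^{\nu-1}h^m=\big(B_{m+1}(\nu)-B_{m+1}\big)/(m+1)$ yields $\sum_m\frac{B_{m+1}(\nu)-B_{m+1}}{m+1}\qq_m(B)$. A direct computation from the \e{qry}-type identity for $V$, namely $q_A\!\big(z/(hz+1)\big)(hz+1)^2=q_{V^{-h}AV^{h}}(z)$, gives the companion rule $\qq_{0^*}(V^{-h}CV^{h})=\sum_m h^{m^*}\qq_m(C)$, whence a $V$-run of length $\nu$ contributes the mirror sum $\sum_m\frac{B_{m^*+1}(\nu)-B_{m^*+1}}{m^*+1}\qq_m(C)$. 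This is the structural source of the two Bernoulli families in \e{hv}: the $\frac{B_{r+1}}{r+1}$ family arises from the $T$-runs (through $\Sigma_b$) and the $\frac{B_{r^*+1}}{r^*+1}$ family from the $V$-runs (through $\Sigma_c$), while the leading monomial of $B_{m+1}(\nu)$ supplies the power terms $\frac{k_j^{r+1}}{r+1}$ once the run length is matched to $k_j$.

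The remaining, and hardest, step is the bookkeeping that turns this run-indexed expression into the block-indexed formula \e{hv}. Substituting $V=TST$ into \e{tv} and conjugating rewrites the word in the negative-continued-fraction form \e{tv2}: a run $T^{n_i}V^{m_i}$ becomes $T^{\,n_i+2}S(T^2S)^{m_i-1}$ up to the cyclic merging of adjacent $T$ blocks, so the $T$-runs of \e{tv2} carry the lengths $k_j$ and the block starts $A_j$ are explicit $T$- and $S$-conjugates of the run-start matrices $B,C$ above. I would apply \e{qry2} once more, together with the swap $\qq_r(S^{-1}AS)=(-1)^r\qq_{r^*}(A)$, to move every $\qq_m(B)$ and $\qq_m(C)$ onto the appropriate $\qq_r(A_j)$ and assemble the answer as $\sum_{j=1}^{q}\sum_r(\cdots)\qq_r(A_j)$.

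The main obstacle is precisely this conversion: tracking how the $p$ single-letter shifts of \e{tv} distribute among the $q$ blocks, keeping the $S$-swap signs straight, and checking that the leftover endpoint contributions assemble into the correction $\delta(r)$. The $\delta$'s are supported exactly at $r,r^*\in\{0,1\}$, which is where the Faulhaber endpoints and the value $B_1=-\tfrac12$ are delicate (the $m=0$ summand contributes the bare run length, and the $\tfrac1{m+1}$ factor at $m=0$ is where the $1/(2n-2)$ weights must come from). I would pin these down by isolating the coefficients of $\qq_0,\qq_{0^*},\qq_1,\qq_{1^*}$ separately, after confirming that the bulk coefficients for $2\lqs r\lqs 2n-4$ already match from the Faulhaber computation; the case $A=TV$ (where $q=1$ and $k_1=3$) provides a convenient end-to-end consistency check.
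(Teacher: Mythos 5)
Your ingredients are all correct: the $T$-invariance of $\qq_{0^*}$, the $V$-invariance of $\qq_0$, the companion rule $\qq_{0^*}(V^{-h}CV^{h})=\sum_m h^{m^*}\qq_m(C)$ (which indeed follows from $V^{h}=-ST^{-h}S$ together with \e{qry2}), and your test case $A=TV$, $q=1$, $k_1=3$ does check out against \e{hv}. But as written this is a plan, not a proof, and the part you defer with ``I would\dots'' is not routine bookkeeping; it is the entire content of the theorem. Concretely: your Faulhaber step produces data indexed by the runs of \e{tv} --- polynomials in the $T$-run lengths $n_i$ attached to $\qq_m$ of the $T$-run starting matrices, and polynomials in the $V$-run lengths $m_i$ attached to $\qq_m$ of the $V$-run starting matrices --- whereas \e{hv} is indexed by the $q$ blocks of \e{tv2}, and a single $V$-run of length $m_i$ corresponds to $m_i$ distinct blocks (one of length $n_i+2$ and $m_i-1$ of length $2$), each carrying its own matrix $A_j$. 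Your $V$-run summation collapses all of that onto one run-start matrix as a polynomial of degree up to $2n-1$ in $m_i$; undoing the collapse, i.e.\ redistributing $B_{m^*+1}(m_i)$ over those blocks by iterated conjugation by $T^2S$, is a telescoping argument comparable in length to the paper's entire proof. Moreover, two of your guiding assertions fail at face value, which shows the matching cannot be term-by-term: the constant Bernoulli parts produced by $\Sigma_c$ carry the sign $-B_{m^*+1}/(m^*+1)$ and sit on $\qq_m$ of a run-start matrix, while \e{hv} has $+B_{r^*+1}/(r^*+1)$ on $\qq_r(A_j)$ (the sign flip and index transfer only emerge after an $S$-swap, whose factor $(-1)^m$ is $-1$ exactly on the odd $m$ where $B_{m^*+1}$ can survive, the $B_1$ case needing separate care); and your $T$-run power terms are powers of $n_i=k_j-2$, not of $k_j$, so the ``bulk coefficients for $2\lqs r\lqs 2n-4$'' do not ``already match from the Faulhaber computation'' --- all lower-order terms of $B_{m+1}(k_j-2)$ versus $k_j^{m+1}$ remain to be reconciled.

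The paper avoids every one of these issues with one observation your plan misses: since $\sum_{j=1}^{q}(k_j-1)=p$, the $p$ cyclic shifts of \e{tv} are \emph{exactly} the conjugates $T^{-u}A_jT^{u}$ for $1\lqs j\lqs q$, $1\lqs u\lqs k_j-1$. This makes the sum block-indexed from the outset, so only $T$-conjugation Faulhaber is ever needed (no $V$-rule, no run-to-block conversion), and the Faulhaber argument is automatically $k_j$. The $B_{r^*+1}/(r^*+1)$ family then comes from a single wrap-around per block --- the term $-\sum_\ell\frac{B_{\ell+1}}{\ell+1}\qq_\ell(T^{-k_j}A_jT^{k_j})$ is rewritten through $A_{j+1}=S^{-1}T^{-k_j}A_jT^{k_j}S$ and the $S$-swap, then re-indexed cyclically --- and $\delta(r)$ comes from the boundary terms $\sum_j\bigl(k_j\qq_{0^*}(A_j)+\qq_0(A_j)\bigr)$ via the identities $\qq_0(A_{j+1})=\qq_{0^*}(A_j)$ and $(2n-2)\,k_j\qq_{0^*}(A_j)=-\qq_1(A_{j+1})-\qq_{1^*}(A_j)$. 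To repair your write-up, either adopt this bijection (after which your $V$-run machinery becomes unnecessary), or actually carry out the redistribution and endpoint analysis you postponed; in its current state the proposal has a genuine gap precisely there.
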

\begin{proof}
To translate between \e{tv} and \e{tv2}, rewrite \e{tv} as
\begin{equation*}
  A=T^{k_1-2}V T^{k_2-2}V \cdots V T^{k_q-2}V
\end{equation*}
for $k_i\gqs 2$. Then $T A T^{-1}$ equals the right side of \e{tv2} and we see
\begin{equation*}
   \tilde{A}_1= T^{-1} A_1 T, \qquad \tilde{A}_{(k_1-1)+1}= T^{-1} A_2 T, \qquad \tilde{A}_{(k_1-1)+(k_2-1)+1}= T^{-1} A_3 T,
\end{equation*}
and so on. Set $s_0:=0$ and $s_j:=(k_1-1)+(k_2-1)+ \cdots +(k_{j}-1)$. Then
\begin{equation*}
  \tilde{A}_{s_{j-1}+u} = T^{-u} A_j T^{u} \qquad \text{for} \quad 1\lqs j \lqs q, \quad 1\lqs u \lqs k_j-1.
\end{equation*}
Hence
\begin{align*}
  \sum_{j=1}^p \left(\qq_{0^*}(\tilde{A}_j) - \qq_{0}(\tilde{A}_j) \right)  & = \sum_{j=1}^q \sum_{u=1}^{k_j-1} \left(\qq_{0^*}(T^{-u} A_j T^{u}) - \qq_{0}(T^{-u} A_j T^{u}) \right)\\
   & = \sum_{j=1}^q \sum_{u=1}^{k_j-1} \left(\qq_{0^*}( A_j ) - \sum_{r=0}^{2n-2}  u^{r} \qq_r(A_j) \right),
\end{align*}
employing the left side of \e{qry2}. Let
\begin{equation*}
  S_r(k):= 1^r+2^r+\cdots +k^r = \frac 1{r+1} \sum_{\ell=0}^{r} \binom{r+1}{\ell}B_\ell \cdot (k+1)^{r+1-\ell}-\delta_{r,0},
\end{equation*}
using our last required property of the Bernoulli numbers, as in \cite[Eq. (2.2)]{AD08}.
Then
\begin{equation*}
  \sum_{j=1}^p \left(\qq_{0^*}(\tilde{A}_j) - \qq_{0}(\tilde{A}_j) \right)
  = \sum_{j=1}^q  \left((k_j-1)\qq_{0^*}( A_j ) - \sum_{r=0}^{2n-2}  S_r(k_j-1) \qq_r(A_j) \right).
\end{equation*}
Focussing on the sum over $r$ finds
\begin{multline*}
  - \sum_{r=0}^{2n-2}  S_r(k_j-1) \qq_r(A_j)  = \qq_0(A_j)- \sum_{r=0}^{2n-2}  \frac 1{r+1} \sum_{\ell=0}^{r} \binom{r+1}{\ell}B_\ell k_j^{r+1-\ell} \qq_r(A_j) \\
    = \qq_0(A_j) - \sum_{r=0}^{2n-2} \frac{k_j^{r+1}}{r+1} \qq_r(A_j)- \sum_{r=0}^{2n-2}  \frac 1{r+1} \sum_{\ell=1}^{r} \binom{r+1}{\ell}B_\ell k_j^{r+1-\ell} \qq_r(A_j).
\end{multline*}
Write the third term above as
\begin{equation*}
  -\sum_{\ell=0}^{2n-2} \frac{B_{\ell+1}}{\ell+1} \sum_{r=\ell+1}^{2n-2} \binom{r}{\ell} k_j^{r-\ell} \qq_r(A_j)
  = \sum_{\ell=0}^{2n-2} \frac{B_{\ell+1}}{\ell+1}\qq_\ell(A_j) -
  \sum_{\ell=0}^{2n-2} \frac{B_{\ell+1}}{\ell+1} \sum_{r=\ell}^{2n-2} \binom{r}{\ell} k_j^{r-\ell} \qq_r(A_j).
\end{equation*}
With \e{qry2}, the last term is recognized as
\begin{align*}
  -\sum_{\ell=0}^{2n-2} \frac{B_{\ell+1}}{\ell+1} \sum_{r=\ell}^{2n-2} \binom{r}{\ell} k_j^{r-\ell} \qq_r(A_j) &  = -\sum_{\ell=0}^{2n-2} \frac{B_{\ell+1}}{\ell+1}  \qq_\ell(T^{-k_j} A_j T^{k_j}) \\
   & = -\sum_{\ell=0}^{2n-2} \frac{B_{\ell^*+1}}{\ell^*+1}  \qq_{\ell^*}(T^{-k_j} A_j T^{k_j}) \\
   & = -\sum_{\ell=0}^{2n-2} \frac{B_{\ell^*+1}}{\ell^*+1}  (-1)^\ell \qq_{\ell}(S^{-1}T^{-k_j} A_j T^{k_j}S) \\
   & = -\sum_{\ell=0}^{2n-2} \frac{B_{\ell^*+1}}{\ell^*+1}  (-1)^\ell \qq_{\ell}(A_{j+1})\\
   & = \qq_{0^*}(A_{j+1})+\sum_{\ell=0}^{2n-2} \frac{B_{\ell^*+1}}{\ell^*+1}   \qq_{\ell}(A_{j+1}).
\end{align*}
Altogether we have shown
\begin{multline*}
  \sum_{j=1}^p \left(\qq_{0^*}(\tilde{A}_j) - \qq_{0}(\tilde{A}_j) \right)
  = \sum_{j=1}^q  \left( (k_j-1)\qq_{0^*}( A_j ) +\qq_0(A_j) - \sum_{r=0}^{2n-2} \frac{k_j^{r+1}}{r+1} \qq_r(A_j)\right. \\
  \left. + \sum_{\ell=0}^{2n-2} \frac{B_{\ell+1}}{\ell+1}\qq_\ell(A_j) + \qq_{0^*}(A_{j+1})+\sum_{\ell=0}^{2n-2} \frac{B_{\ell^*+1}}{\ell^*+1}   \qq_{\ell}(A_{j+1})\right).
\end{multline*}
Since $A_{q+1}=A_1$ for these cyclic permutations, this is
\begin{equation*}
  \sum_{j=1}^q  \left( k_j \qq_{0^*}( A_j ) +\qq_0(A_j)\right)+
  \sum_{j=1}^{q} \sum_{r=0}^{2n-2}\left(  \frac{B_{r+1}}{r+1}+\frac{B_{r^*+1}}{r^*+1}
  -\frac{k_j^{r+1}}{r+1}
  \right) \qq_r(A_j).
\end{equation*}
To get the first sum above into the final form, note first that
\begin{align*}
  \qq_r(A_{j+1}) & = \qq_r(S^{-1}T^{-k_j}A_j T^{k_j}S) \\
   & = (-1)^r \qq_{r^*}(T^{-k_j}A_j T^{k_j})\\
   & = (-1)^r \sum_{m=0}^{2n-2} \binom{m}{r^*} k_j^{m-r^*}\qq_{m}(A_j),
\end{align*}
by \e{qry2}. In the cases $r=0$, $1$ we obtain
\begin{equation*}
  \qq_0(A_{j+1})= \qq_{0^*}(A_j), \qquad (2n-2)k_j \qq_{0^*}(A_j) = -\qq_1(A_{j+1})-\qq_{1^*}(A_j).
\end{equation*}
Hence
\begin{equation*}
  \sum_{j=1}^q  \left( k_j \qq_{0^*}( A_j ) +\qq_0(A_j)\right)
  = \sum_{j=1}^q  \left( \frac 12 \left(\qq_0(A_j)+ \qq_{0^*}( A_j )\right) - \frac 1{2n-2} \left(\qq_1(A_j)+ \qq_{1^*}( A_j )\right)\right),
\end{equation*}
and \e{hv} follows.
\end{proof}

More generally, the same reasoning gives formulas for the integer-valued spanning symbols
\begin{equation*}
  \Psi_n^{(m)}(A) := \u(A)^{1-n}\sum_{j=1}^p \left(\qq_{m^*}(\tilde{A}_j) - (-1)^m\qq_{m}(\tilde{A}_j) \right),
\end{equation*}
from \cite[Sect. 6]{duke}. The higher Rademacher symbol $\Psi_n$ may be expressed as linear combinations of these, though not uniquely.

\begin{theorem} \label{zero2}
Suppose $n\gqs 2$ and $0\lqs m \lqs 2n-2$. For hyperbolic $A$, in the notation of \e{eee}, \e{dkm},
\begin{multline}\label{hvx}
  \Psi_n^{(m)}(A) = \u(A)^{1-n}\sum_{j=1}^{q} \sum_{r=0}^{2n-2}\left[
\binom{r}{m^*}\beta_{r-m^*}-\binom{r^*}{m}\beta_{r^*-m}
-(-1)^m \binom{r}{m}\beta_{r-m}
\right.\\
\left.
+(-1)^m\binom{r^*}{m^*}\beta_{r^*-m^*}
  -(-1)^m\binom{r}{m}\frac{k_j^{r-m+1}}{r-m+1}
  + \binom{r}{m^*}\frac{k_j^{r-m^*+1}}{r-m^*+1}
  \right] \qq_r(A_j).
\end{multline}
\end{theorem}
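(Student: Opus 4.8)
The plan is to run the argument of Theorem~\ref{zero} essentially verbatim, the one structural difference being that \e{hvx} keeps both explicit $k_j$-power terms rather than performing the $m=0$-specific rewriting that produced the $\delta(r)$ correction in \e{hv}. Starting from the definition of $\Psi_n^{(m)}$ together with the block identification $\tilde{A}_{s_{j-1}+u}=T^{-u}A_jT^{u}$ for $1\lqs u\lqs k_j-1$, I would write $\sum_{j=1}^p \qq_r(\tilde{A}_j)=\sum_{j=1}^q\sum_{u=1}^{k_j-1}\qq_r(T^{-u}A_jT^{u})$ and expand each summand by the left identity of \e{qry2}. Applying this to $\qq_{m^*}$ and to $\qq_m$ turns the inner $u$-sums into the power sums $S_{r-m^*}(k_j-1)$ and $S_{r-m}(k_j-1)$, which I replace using the power-sum formula for $S_r(k)$ recalled in the proof of Theorem~\ref{zero}.

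Next I would separate contributions according to the power of $B_\ell$. The $\ell=0$ terms of the two power sums give exactly $\binom{r}{m^*}k_j^{r-m^*+1}/(r-m^*+1)$ and $-(-1)^m\binom{r}{m}k_j^{r-m+1}/(r-m+1)$, the two explicit $k_j$-terms of \e{hvx}. For the $\ell\gqs1$ terms I would apply $\tfrac1{N+1}\binom{N+1}{\ell}=\tfrac1\ell\binom{N}{\ell-1}$ with $N=r-m^*$ (resp.\ $N=r-m$), the subset identity $\binom{r}{m^*}\binom{r-m^*}{\ell-1}=\binom{m^*+\ell-1}{m^*}\binom{r}{m^*+\ell-1}$, and the substitution $s=m^*+\ell-1$. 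After exchanging the order of summation the $\qq_{m^*}$ stream becomes
\[
  \sum_{s}\frac{B_{s-m^*+1}}{s-m^*+1}\binom{s}{m^*}\sum_{r=s+1}^{2n-2}\binom{r}{s}k_j^{r-s}\qq_r(A_j),
\]
and completing the inner sum down to $r=s$ identifies it, through the right-to-left reading of \e{qry2}, with $\qq_s(T^{-k_j}A_jT^{k_j})-\qq_s(A_j)$; the $\qq_m$ stream behaves identically with $m^*$ replaced by $m$.

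The diagonal pieces $-\qq_s(A_j)$, once combined with the $-\delta_{r-m^*,0}$ and $-\delta_{r-m,0}$ corrections in that power-sum formula and rewritten via $B_{t+1}/(t+1)=(-1)^t\beta_t$, produce the two direct Bernoulli terms $\binom{r}{m^*}\beta_{r-m^*}$ and $-(-1)^m\binom{r}{m}\beta_{r-m}$: the stray sign $(-1)^{r-m^*}$ disappears because $\beta_{r-m^*}$ vanishes unless $r=m^*$ or $r-m^*$ is odd, and the inconvenient value $\beta_0=-1/2$ occurring at $r=m^*$ (resp.\ $r=m$) is precisely cancelled by the $\delta$-correction there. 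For the completed conjugates $\qq_s(T^{-k_j}A_jT^{k_j})$ I would reindex $s\mapsto s^*$, invoke $\qq_{s^*}(B)=(-1)^s\qq_s(S^{-1}BS)$ from \e{qry2} with $A_{j+1}=S^{-1}T^{-k_j}A_jT^{k_j}S$, and then shift $j\mapsto j-1$ using $A_{q+1}=A_1$. This yields the two transferred terms $-\binom{r^*}{m}\beta_{r^*-m}$ and $(-1)^m\binom{r^*}{m^*}\beta_{r^*-m^*}$, the signs coming out right because $r+r^*=2n-2$ is even and $(-1)^{m^*}=(-1)^m$. Assembling the six contributions gives \e{hvx}.

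I expect the only real difficulty to be the sign and index bookkeeping of the last paragraph: four interwoven streams ($\qq_m$ versus $\qq_{m^*}$, each split into a diagonal part and an $S$-conjugated part) must be tracked simultaneously, and the $\beta_0$ boundary values at $r=m$ and $r=m^*$ must be seen to be reconciled by the $\delta$-corrections. No input beyond the conjugation relations \e{qry2} and the power-sum expansion is needed; in particular, because \e{hvx} retains both $k_j$-terms, the degenerate index $r=m^*$ (where $S_0(k_j-1)=k_j-1$) requires no separate treatment, in contrast to the $m=0$ case handled in Theorem~\ref{zero}.
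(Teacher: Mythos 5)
Your proposal is correct and takes essentially the same route the paper intends: the paper offers no separate proof of Theorem \ref{zero2}, saying only that ``the same reasoning'' as in Theorem \ref{zero} gives it, and your run-through of that argument for general $m$ checks out --- the $\ell=0$ power-sum terms give the two explicit $k_j$-terms, the completed-sum/conjugation step via \e{qry2} and the cyclic shift $A_{q+1}=A_1$ produce the two transferred Bernoulli terms with the correct signs, and the $\beta_0$ discrepancies at $r=m$ and $r=m^*$ are indeed absorbed by the $\delta$-corrections, which is precisely why no analogue of the $\delta(r)$ rewriting of Theorem \ref{zero} is needed.
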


If $A$ fixes $\omega$ and $\omega'$ then, from a short calculation,
\begin{align*}
  c^{n-1} Q_r(\omega,\omega') & = \qq_r(A), \\
  c^{n-1} G_n(\omega,\omega') & = \jb_n \sum_r \xi_r \cdot \qq_r(A).
\end{align*}
So formula \e{dkm} may be rephrased with the $\qq_r$ notation \e{fraa} as
\begin{equation} \label{frp}
  \Psi_n(A) = \ib_n \Psi_n^{(0)}(A)+ \u(A)^{1-n} \jb_n\sum_{j=1}^{q} \sum_{r=0}^{2n-2} \xi_r \cdot \qq_r(A).
\end{equation}
Inserting the expressions from Theorem \ref{zero} and \e{xi} into \e{frp} finds a lot of cancellation, giving the following compact result.
\begin{cor} \label{coo}
For $n\gqs 2$ and hyperbolic $A \in \G$ we have
\begin{equation}\label{hv2}
  \Psi_n(A) = \u(A)^{1-n} \jb_n\sum_{j=1}^{q} \sum_{r=1}^{2n-1}\left( \frac{B_{2n}}{2n}\frac{k_j^{r}}{r}
  -
   \frac{B_{r}}{r}
  \frac{B_{2n-r}}{2n-r}
  \right) \qq_{r-1}(A_j).
\end{equation}
\end{cor}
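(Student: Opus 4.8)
The plan is to insert the evaluation of $\Psi_n^{(0)}$ from Theorem \ref{zero} and the definition \e{xi} of $\xi_r$ directly into the decomposition \e{frp}, collect the coefficient of each $\qq_r(A_j)$, and verify that the Bernoulli contributions collapse to the compact form \e{hv2}. First I would rewrite the scalar $\ib_n$ in \e{frp}: by \e{ij} and \e{eee} one has $\ib_n = \jb_n\,\beta_{2n-1}$ with $\beta_{2n-1} = -B_{2n}/(2n)$, since $2n-1$ is odd. Substituting this together with Theorem \ref{zero}, both terms of \e{frp} carry the common prefactor $\u(A)^{1-n}\jb_n$, so $\Psi_n(A) = \u(A)^{1-n}\jb_n \sum_{j=1}^q \sum_{r=0}^{2n-2} C_r\,\qq_r(A_j)$ with
\begin{equation*}
  C_r = \beta_{2n-1}\left( \frac{B_{r+1}}{r+1}+\frac{B_{r^*+1}}{r^*+1} -\frac{k_j^{r+1}}{r+1} + \delta(r)\right) + \xi_r .
\end{equation*}

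Next I would convert all Bernoulli quotients to the $\beta$ notation via $B_{m+1}/(m+1) = (-1)^m \beta_m$, using that $r^* = 2n-2-r$ has the same parity as $r$, so that $B_{r+1}/(r+1)+B_{r^*+1}/(r^*+1) = (-1)^r(\beta_r+\beta_{r^*})$ and $\beta_r\beta_{r^*} = \bigl(B_{r+1}/(r+1)\bigr)\bigl(B_{r^*+1}/(r^*+1)\bigr)$. The power-sum piece of $C_r$ is then already $\tfrac{B_{2n}}{2n}\,k_j^{r+1}/(r+1)$, and the $-\beta_r\beta_{r^*}$ coming from the last term of \e{xi} already supplies the quadratic term; after the shift $r\mapsto r+1$ these are exactly the two summands in \e{hv2}. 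It remains to show the surviving linear-in-$\beta$ and Kronecker-delta terms cancel, that is, for interior $r$,
\begin{equation*}
  \bigl[(-1)^r+1\bigr](\beta_r+\beta_{r^*}) + \frac{\delta_{r,0}+\delta_{r^*,0}}{2} = 0,
\end{equation*}
where I have used $\delta(r) + (\delta_{r,1}+\delta_{r^*,1})/(2n-2) = (\delta_{r,0}+\delta_{r^*,0})/2$ to absorb the $\delta_{r,1}$ terms of \e{xi} against $\xi_r$.

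This identity is immediate by parity: for odd $r$ the factor $(-1)^r+1$ vanishes and neither $r$ nor $r^*$ equals $0$, while for even interior $r$ one has $\beta_r=\beta_{r^*}=0$ because the odd-index Bernoulli numbers $B_{r+1},B_{r^*+1}$ vanish, and again $\delta_{r,0}=\delta_{r^*,0}=0$. The step I expect to need the most care is the pair of boundary indices $r=0$ and $r=2n-2$, where $\xi_r$ is set to $0$ by fiat rather than by \e{xi}; there the claimed value of $C_r$ must be checked directly. For $r=0$ the quadratic term vanishes since $B_{2n-1}=0$, and the residual $-\beta_{2n-1}B_1 - \beta_{2n-1}\delta(0)$ cancels because $B_1=-1/2$ and $\delta(0)=1/2$ (using $n\gqs 2$, so $0^* = 2n-2 \neq 0,1$); the index $r=2n-2$ follows by the $r\leftrightarrow r^*$ symmetry. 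Finally, reindexing $r\mapsto r-1$ so the sum runs over $1\lqs r\lqs 2n-1$ turns $C_{r-1}\,\qq_{r-1}(A_j)$ into the summand of \e{hv2}, which completes the proof.
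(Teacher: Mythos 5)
Your proposal is correct and takes essentially the same approach as the paper: the paper's entire proof of Corollary \ref{coo} is the one-line remark that inserting Theorem \ref{zero} and \e{xi} into \e{frp} ``finds a lot of cancellation,'' and your computation is exactly that cancellation carried out in detail. The conversion to $\beta$ notation, the parity argument killing the linear terms for interior $r$ (odd $r$ via $(-1)^r+1=0$, even $r$ via vanishing of odd-index Bernoulli numbers), and the separate check at the boundary indices $r=0$ and $r=2n-2$ where $\xi_r=0$ by definition, all verify the uniform coefficient $-\beta_{2n-1}k_j^{r+1}/(r+1)-\beta_r\beta_{r^*}$, which reindexes to \e{hv2} as you state.
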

This is  also  discussed in \cite[Sect. 3]{duke}.
The left side of \e{hv2} may be replaced with $\jb_n \zeta(1-n, \mathcal A)$ by \e{zet}. In this way we have established another proof of Zagier's formula in \cite[p. 149]{Z76}. Similar formulas are shown in \cite{VZ13} and as the main theorem of \cite{JL16}.

%invariant as $A$ goes to $-A$??

{\small \bibliography{duke-biblio} }

{\small %\footnotesize
\vskip 5mm
\noindent
\textsc{Department of Mathematics, The CUNY Graduate Center, 365 Fifth Avenue, New York, NY 10016-4309, U.S.A.}

\noindent
{\em E-mail address:} \texttt{cosullivan@gc.cuny.edu}
}

\end{document}